\newfont{\footsc}{cmcsc10 at 8truept}
\newfont{\footbf}{cmbx10 at 8truept}
\newfont{\footrm}{cmr10 at 10truept}
\renewcommand{\ps@plain}{%
\renewcommand{\@oddfoot}{\footsc
  {\footbf }  \footrm\thepage}}
\makeatother \pagestyle{plain} \leftmargin=25mm
\newtheorem{conj}{Conjecture}
\newtheorem{thm}{Theorem}[section]
\newtheorem{cor}[thm]{Corollary}
\newtheorem{lem}[thm]{Lemma}
\theoremstyle{definition}
\newtheorem{defn}[thm]{Definition}
\theoremstyle{remark}
\newtheorem{rem}[thm]{Remark}
\title{Covering the crosspolytope with its smaller
homothetic copies}   
\date{}
\author{Yanlu Lian$^1$ and Yuqin Zhang$^2${\footnote{Corresponding
author. }} \thanks{E-mail addresses: yanlu\_lian@tju.edu.cn, yuqinzhang@tju.edu.cn.}}
\begin{document}
\maketitle
\begin{center}\vskip -0.8cm
{\small $^1$Center for Applied Mathematics,
Tianjin University, 300072, Tianjin, China \\$^2$ School of Mathematics, Tianjin University, Tianjin, 300072, China
 \vskip 0.3cm
}
\end{center}

\begin{abstract}
  In 1957, Hadwiger made the famous conjecture that any convex body of $n$-dimensional Euclidean space $\mathbb{E}^n$ can be covered by $2^n$ smaller positive homothetic copies. Up to now, this conjecture is still open for all $n\geq 3$. Denote by $\gamma_{m}(K)$ the smallest positive number $\lambda$ such that $K$ can be covered by $m$ translations of $\lambda K$. The values of $\gamma_m(K)$ for some particular $m$ and $K$ have been studied. In this article, we will focus on the situation where $K$ is the unit crosspolytope of the three-dimensional.

{\bf Keywords:} convex body, Hadwiger's covering conjecture, covering functional, homothetic copy

{\bf MSC:} 52C17, 52A15, 52C07

\end{abstract}

\maketitle

\section{ Introduction and preliminaries}

 \quad Let $\mathcal{K}^n$ be the set of all convex bodies, i.e., compact convex sets, in the $n$-dimensional Euclidean space $\mathbb{E}^n$ with non-empty interior. The boundary of a convex body $K$ is denoted by $\partial(K)$ and the interior is denoted by $int(K)$. Let $c(K)$ denote the covering number of $K,$ i.e., the smallest number of translations of $\mathrm{int}(K)$ such that their union can cover $K$.
 For arbitrarily two points $\boldsymbol x, \boldsymbol y$ in $\mathbb{E}^n$, let $xy$ denote the closed segment joining points $\boldsymbol x$ and $\boldsymbol y$, and let $\left\vert{xy}\right\vert$ be the distance between $\boldsymbol x$ and $\boldsymbol y$. By $L_{xy}$ we denote the straight line through disjoint points $\boldsymbol x$ and $\boldsymbol y$. For a convex body, denote by $d(K)$ the diameter of $K$, i.e.,
 $$d(K)=\sup\{{\left\vert{xy}\right\vert}: \boldsymbol x, \boldsymbol y\in K\}.$$

 In 1955, Levi \cite{Levi} studied the covering number in dimension $2$. He proved that
 $$c(K)=
 \begin{cases}
 4,&\text{if K is a parallelogram},\\
 3,&\text{otherwise.}
 \end{cases}$$
 In 1957, Hadwiger \cite{Hadwiger} studied this number further and proposed the famous conjecture:
 \begin{conj}
 {\bf{(Hadwiger's covering conjecture)}} For every $K\in \mathcal{K}^n$ we have
 $$c(K)\leq2^n,$$
 where the equality holds if and only if $K$ is a parallelopiped.
\end{conj}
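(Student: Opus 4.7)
The plan is to recast the problem in its equivalent illumination formulation and then attempt an induction on the dimension $n$. By the Boltyanski--Hadwiger equivalence, $c(K)$ coincides with the minimum number of directions $\boldsymbol v_1,\dots,\boldsymbol v_m\in\mathbb{E}^n$ such that every boundary point $\boldsymbol p\in\partial(K)$ is illuminated by some $\boldsymbol v_i$ (meaning the ray from $\boldsymbol p$ in direction $\boldsymbol v_i$ enters $\mathrm{int}(K)$ immediately). Since the target bound decomposes as $2^n=2\cdot 2^{n-1}$, this strongly suggests a recursion across dimension.

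First I would take as base case $n=2$ Levi's theorem stated above, which gives $c(K)\leq 4$ with equality iff $K$ is a parallelogram. For the inductive step, I would pick a hyperplane $H\subset\mathbb{E}^n$ with unit normal $\boldsymbol u$ and the projection $\pi\colon K\to K'=\pi(K)\subset H$, apply the inductive bound to obtain illuminating directions $\boldsymbol v'_1,\dots,\boldsymbol v'_m$ of $\partial(K')$ with $m\leq 2^{n-1}$, and then lift each $\boldsymbol v'_i$ to two directions $\boldsymbol v_i^{+}=\boldsymbol v'_i+\varepsilon\boldsymbol u$ and $\boldsymbol v_i^{-}=\boldsymbol v'_i-\varepsilon\boldsymbol u$. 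The goal is that every $\boldsymbol p\in\partial(K)$ sits on either the ``upper'' or ``lower'' cap of $\partial(K)$ relative to $H$ and is therefore illuminated by one of $\boldsymbol v_i^{\pm}$, with $i$ chosen from an illumination of $\pi(\boldsymbol p)$.

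For the equality clause, I would argue by rigidity: if $c(K)=2^n$, then the projection $K'$ must itself attain $c(K')=2^{n-1}$, so by induction $K'$ is a parallelepiped in $H$; moreover the lifting must be tight on both sides of $H$, which should force the fibres of $\pi$ to be parallel segments of constant length, so that $K$ is a prism over a parallelepiped and hence a parallelepiped in $\mathbb{E}^n$.

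The main obstacle, and the reason the conjecture remains open for $n\geq 3$, is the soundness of the lifting step. Boundary points $\boldsymbol p\in\partial(K)$ whose projection $\pi(\boldsymbol p)$ lies in $\mathrm{int}(K')$ receive no horizontal illumination data, and even when $\pi(\boldsymbol p)\in\partial(K')$ the horizontal direction $\boldsymbol v'_i$ need not lift to any $\boldsymbol v_i^{\pm}$ that genuinely enters $\mathrm{int}(K)$ through $\boldsymbol p$; the failure is sharpest along the ``silhouette'' of $K$, where the tangent hyperplane contains $\boldsymbol u$. A successful implementation will therefore require an extra tool---controlled approximation in the Banach--Mazur sense by parallelepipeds, a finer analysis of the face structure near the silhouette, or a probabilistic averaging over the choice of $H$---to patch the silhouette set, and it is precisely this patching problem that I expect to dominate the difficulty.
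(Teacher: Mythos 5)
This statement is a \emph{conjecture}, not a theorem of the paper: the authors explicitly record that Hadwiger's covering conjecture is still open for every $n\geq 3$, and the paper contains no proof of it (its actual results concern the covering functionals $\gamma_m(K_1)$ of the three-dimensional crosspolytope). So there is no proof in the paper to compare yours against, and your proposal, as you yourself concede in the final paragraph, is a strategy with an unfilled hole rather than a proof.

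The hole is fatal and is exactly where you locate it. In the Boltyanski--Hadwiger illumination formulation, knowing that $\boldsymbol v'_i$ illuminates $\pi(\boldsymbol p)$ as a boundary point of $K'=\pi(K)$ only tells you that the ray from $\pi(\boldsymbol p)$ enters the relative interior of $K'$; it gives no control over whether the lifted ray from $\boldsymbol p$ in direction $\boldsymbol v'_i\pm\varepsilon\boldsymbol u$ enters $\mathrm{int}(K)$, because the fibre $\pi^{-1}(\pi(\boldsymbol p))\cap K$ may degenerate to a point precisely along the silhouette, and nearby fibres may shrink at a rate that defeats any fixed $\varepsilon$. Points with $\pi(\boldsymbol p)\in\mathrm{int}(K')$ are likewise unassigned: they are illuminated by directions close to $\pm\boldsymbol u$, but your lifted directions are nearly horizontal, so the upper and lower caps are not actually covered by the construction. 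Since any correct completion of this induction would resolve a problem that has been open since 1957 (and for which the best known general bounds, quoted in the paper, are of order $4^n e^{c\sqrt{n}}$ rather than $2^n$), the ``patching'' you defer to is not a technical afterthought but the entire content of the conjecture. The equality clause is in even worse shape: it presupposes both the bound and a rigidity/stability statement for which no mechanism is offered. If your goal is to engage with what the paper actually proves, the productive targets are Theorems 1.1--1.7 on $\gamma_m(K_1)$, not the conjecture itself.
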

 This conjecture has been studied by many authors. Lassak \cite{Lassak1} proved this conjecture for the three-dimensional centrally symmetric case. Rogers and Zong \cite{Rogers} obtained the currently best known upper bound
 $$c(K)\leq \binom{2n}{n}(nlogn+nloglogn+5n)$$
 for general $n$-dimensional convex bodies, and
 $$c(K)\leq2^n(nlogn+nloglogn+5n)$$
 for centrally symmetric ones. Combining ideas from \cite{Artstein} with a new result on the K{\"o}vner-Besicovitch measure of symmetry for convex bodies, Huang et al. \cite{Huang} obtained a new general upper bound for Hadwiger's problem: there exist universal constants $c_1, c_2 > 0$ such that for all $n\geq 2$ and every convex body $K \in \mathcal{K}^n$,
 $$c(K)\leq c_14^ne^{c_2\sqrt{n}}.$$
 Moreover, two different versions have been discovered and studied (see \cite{Boltyanski1} \cite{Boltyanski2} \cite{Brass} \cite{Zong2} for general references). Nevertheless, we are still far away from the solution of the conjecture, even three-dimensional case.

 For $K \in \mathcal{K}^n$ and any positive integer $m$, we denote by $\gamma_m(K)$ the smallest positive number $\lambda$ such that $K$ can be covered by $m$ translations of $\lambda K$, i.e.,
  $$\gamma_m(K)=\min\{\lambda>0: \exists\{\boldsymbol u_i:i=1,\dots,m\}\subset \mathbb{E}^n, s.t.\quad K\subseteq \cup_{i=1}^{m}(\lambda K+\boldsymbol u_i)\}.$$
Clearly, we have $\gamma_m(K)=1$, for all $m\leq n$, and $\gamma_m(K)$ is a non-increasing step sequence for all positive integers $m$ and all $n$-dimensional convex bodies $K$.

 It is easy to see that $c(K)\leq m$ for some $m\in Z^{+}$ if and only if $\gamma_m(K) < 1$. Thus, $\gamma_m(K)$ plays an important role in the Hadwiger's covering conjecture. Lassak \cite{Lassak2} showed that for every two-dimensional convex domain $K$,
 $$\gamma_4(K)\leq \frac{\sqrt{2}}{2}.$$
Zong \cite{Zong2} obtained
$$\gamma_8(C)\leq \frac{2}{3}$$
for a bounded three-dimensional convex cone $C$, and
$$\gamma_8(K_p)\leq \sqrt{\frac{2}{3}}$$
for all the unit ball $K_p$ of the three-dimensional $l_p$ spaces.

 In this paper, we will only deal with the situation where $K_1$ is the unit cross-polytope of the three-dimensional, in other words,
  $$K_1=\{(x_1,x_2,x_3): |x_1|+|x_2|+|x_3|\leq 1\}.$$
 Moreover, for simplicity of notation, we let $\Delta $ denote an equilateral triangle, and instead of $\gamma_m(K_1)K_1$ we write $\gamma_mK_1$. We say that the three planes of symmetry of $K_1$ are base squares. Other notations are denoted as in Figure 1.
 \begin{figure}[htbp]
\centering
\includegraphics[scale=0.4]{1.png}
\caption{$K_1$}
\end{figure}

As we know, $\gamma_m(K_1)=1$ for all $m\leq n=3$, but for $m>3$ we obtain the following results.
\begin{thm}
For $m=4,5$, $\gamma_m(K_1)=1$.
\end{thm}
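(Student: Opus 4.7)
The plan is as follows. Since $K_1$ obviously covers itself with $\lambda=1$ and a single translate, we have $\gamma_m(K_1)\leq 1$ for every $m\geq 1$; moreover the sequence $\gamma_m(K_1)$ is non-increasing in $m$. Thus the entire content of the statement is the lower bound $\gamma_5(K_1)\geq 1$, from which $\gamma_4(K_1)\geq 1$ follows automatically.

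To prove $\gamma_5(K_1)\geq 1$, I would exploit the fact that $K_1$ is the closed unit ball of the $\ell_1$-norm $\|\cdot\|_1$ on $\mathbb{E}^3$. Hence every translate $\lambda K_1+\boldsymbol u$ has $\ell_1$-diameter equal to $2\lambda$. On the other hand, the six vertices $\pm\boldsymbol e_1,\pm\boldsymbol e_2,\pm\boldsymbol e_3$ of $K_1$ pairwise satisfy $\|\boldsymbol v-\boldsymbol w\|_1=2$ for $\boldsymbol v\neq\boldsymbol w$: indeed $\|\boldsymbol e_i-(-\boldsymbol e_i)\|_1=2$, and $\|\varepsilon\boldsymbol e_i-\delta\boldsymbol e_j\|_1=2$ whenever $i\neq j$ and $\varepsilon,\delta\in\{\pm 1\}$. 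Consequently, whenever $\lambda<1$, a single translate of $\lambda K_1$ cannot contain two distinct vertices of $K_1$ simultaneously.

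The remainder is a short pigeonhole argument. Suppose, for contradiction, that $K_1\subseteq\bigcup_{i=1}^{5}(\lambda K_1+\boldsymbol u_i)$ for some $\lambda<1$. Then the six vertices of $K_1$ are distributed among the five translates, but each translate accepts at most one of them — impossible. Hence $\gamma_5(K_1)\geq 1$, so $\gamma_4(K_1)=\gamma_5(K_1)=1$ in view of the trivial upper bound.

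I do not anticipate any real obstacle for this theorem: the whole proof is a diameter comparison combined with a pigeonhole on the six vertices. The substantive work of the paper will begin with larger $m$, where a single translate of $\lambda K_1$ with $\lambda<1$ may well contain several vertices, so that vertex-counting collapses and one must instead descend to a finer analysis of the boundary structure of $K_1$.
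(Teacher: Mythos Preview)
Your proof is correct and follows the same overall strategy as the paper: six vertices distributed among at most five translates forces, by pigeonhole, some translate to contain two distinct vertices; one then shows this is impossible when $\lambda<1$. The paper isolates this last step as its Lemma~2.1 and proves it by a Euclidean case analysis, distinguishing whether the two vertices span a diagonal or an edge of a base square of $K_1$. Your $\ell_1$-metric argument is more streamlined: since $K_1$ is the $\ell_1$-unit ball, every translate $\lambda K_1+\boldsymbol u$ has $\ell_1$-diameter $2\lambda$, while any two distinct vertices are at $\ell_1$-distance exactly $2$, so no case split is needed. Both approaches buy the same conclusion; yours just exploits the fact that the ``right'' metric for $K_1$ makes all vertex pairs equidistant.

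One small slip in your closing commentary (not in the proof itself): a translate of $\lambda K_1$ with $\lambda<1$ can \emph{never} contain two distinct vertices of $K_1$---that is precisely what you just established, and it remains true for every $m$. What collapses for $m\geq 6$ is only the pigeonhole step, since one may then assign a separate translate to each vertex.
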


When $m=6$ is equal to the number of vertices of $K_1$, we find that the intersections of a homothetic copy of $K_1$ which contains a vertex $\boldsymbol v$ of $K_1$ with facets associate to $\boldsymbol v$ are contained in some equilateral triangles, then together with $\gamma_3(T)$ for an equilateral triangle $T$ in two-dimensional, we determine the theorem as follows.
\begin{thm}
For $m=6$, $\gamma_m(K_1)=\frac{2}{3}.$
\end{thm}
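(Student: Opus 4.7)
The plan is to establish $\gamma_6(K_1) \leq 2/3$ and $\gamma_6(K_1) \geq 2/3$ separately, the two halves having quite different flavors. For the upper bound, I would exhibit six explicit translates indexed by the vertices of $K_1$: set $S_i = \tfrac{2}{3} K_1 + \tfrac{1}{3} v_i$ for $i = 1, \ldots, 6$. A quick calculation shows that $S_1$ contains every point of $K_1$ with $p_1 \geq 1/3$, since on that half-space the defining inequality $|p_1 - 1/3| + |p_2| + |p_3| \leq 2/3$ for $S_1$ coincides with the $K_1$-inequality $|p_1|+|p_2|+|p_3| \leq 1$. Taking the union over $i$ covers $\{p \in K_1 : \max_i |p_i| \geq 1/3\}$, leaving only the cube $C = [-1/3, 1/3]^3 \subseteq K_1$. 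To finish I would verify octant-by-octant that $C \subseteq \bigcup_i S_i$: in the positive octant, failing to lie in $S_1, S_3, S_5$ simultaneously would force $p_2 + p_3 > 1/3 + p_1$, $p_1 + p_3 > 1/3 + p_2$, $p_1 + p_2 > 1/3 + p_3$, which upon summing give $p_1 + p_2 + p_3 > 1$, contradicting $p_i \leq 1/3$.

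For the lower bound, suppose toward a contradiction that $\lambda < 2/3$ and $K_1 = \bigcup_{i=1}^{6} T_i$ with $T_i = \lambda K_1 + u_i$. Any two distinct vertices of $K_1$ are at distance at least $\sqrt{2} > 2\lambda = d(T_i)$, so each $T_i$ contains at most one vertex of $K_1$; with six vertices and six translates this means, after relabeling, $v_i \in T_i$. The central lemma I would establish is: if $T = \lambda K_1 + u$ contains the vertex $v_1 = (1,0,0)$, then $T \cap F_1 \subseteq \Delta(v_1, F_1)$, where $F_1 = \mathrm{conv}(v_1, v_3, v_5)$ is the facet in the plane $p_1 + p_2 + p_3 = 1$ and $\Delta(v_1, F_1) = \{p \in F_1 : p_1 \geq 1 - \lambda\}$ is an equilateral sub-triangle of $F_1$ with vertex $v_1$, parallel sides, and side length $\sqrt{2}\lambda$. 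The verification is a short $\ell_1$-estimate on the face: combining $\sum_i |p_i - u_i| \leq \lambda$ with the reverse triangle inequality $|p_i - u_i| \geq p_i - |u_i|$ (valid since $p_i \geq 0$ on $F_1$) and the vertex-containment condition $|1 - u_1| + |u_2| + |u_3| \leq \lambda$, a case split on the sign of $p_1 - u_1$ gives $p_1 \geq 1 - \lambda$ in both cases. Symmetric statements hold at $v_3$ and $v_5$.

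The most delicate point, and the place where I expect to spend the most care, is ruling out that the three translates $T_2, T_4, T_6$ (attached to vertices not on $F_1$) help cover $F_1$. For this I would prove a companion estimate: the condition $v_2 \in T$ forces $|u_1| \geq 1 - \lambda$, and then any $p \in T \cap F_1$ satisfies $p_1 + |u_1| = |p_1 - u_1| \leq |p_1 - u_1| + |p_2 - u_2| + |p_3 - u_3| \leq \lambda$, hence $p_1 \leq 2\lambda - 1$. Under $\lambda < 2/3$, this strip lies inside $\Delta(v_3, F_1) \cup \Delta(v_5, F_1)$, because $p_1 \leq 2\lambda - 1$ forces $p_2 + p_3 \geq 2(1 - \lambda)$ and hence $\max(p_2, p_3) \geq 1 - \lambda$; analogously for $T_4, T_6$. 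Consequently $F_1 \subseteq \bigcup_{i \in \{1,3,5\}} \Delta(v_i, F_1)$, so the equilateral triangle $F_1$ is covered by three homothetic copies of ratio $\lambda$ placed at its vertices. Invoking $\gamma_3(\Delta) = 2/3$ for an equilateral triangle $\Delta$ then forces $\lambda \geq 2/3$, the desired contradiction.
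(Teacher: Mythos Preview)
Your proof is correct and follows the same overall strategy as the paper: exhibit the six vertex-centered translates for the upper bound, and for the lower bound reduce to the statement that a single facet would have to be covered by three corner sub-triangles of ratio $\lambda$, contradicting $\gamma_3(\Delta)=2/3$.

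The execution differs in two ways worth noting. For the upper bound, the paper quotes Zong's boundary-covering lemma (Lemma~3.6) rather than doing your direct octant-by-octant $\ell_1$ check; your argument is more self-contained. For the lower bound, the paper packages the ``intersection with a facet lies in a corner sub-triangle'' step as a separate geometric lemma (Lemma~3.2), whereas you compute it directly from the $\ell_1$ constraints. More significantly, the paper's proof asserts without justification that the facet $\Delta pab$ is covered by just the three translates attached to its own vertices; your companion estimate showing that a translate containing an \emph{opposite} vertex meets $F_1$ only inside the strip $\{p_1\le 2\lambda-1\}\subseteq\Delta(v_3,F_1)\cup\Delta(v_5,F_1)$ fills that gap explicitly. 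One minor remark: your diameter argument for ``one vertex per translate'' yields only $\lambda\ge\sqrt{2}/2$, weaker than the paper's Lemma~2.1 (which gives $\lambda\ge 1$), but of course $\sqrt{2}/2>2/3$ so this suffices here.
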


when $m>6$, except the six homothetic copies that cover the six vertices of $K_1$, the last $m-6$ homothetic copies have to cover the parts that are not be covered. Here we introduce an interesting point that we need in the following theorems, we name it $\lambda$-node point. Given an equilateral triangle $T$, and let $T_1, T_2, T_3$ be homothetic copies of $T$ with ratio $\lambda\geq \frac{1}{2}$. These three homothetic copies of $T$ contained in $T$ and each of them has one vertex coinciding with a vertex of $K_1.$ Then we say that the intersection points of the sides of the three homothetic triangles are $\lambda$-node points of $T$. Specially, when $\lambda=\frac{2}{3}$, these three intersection points are conincide, we call this point the centre of $T$.
\begin{thm}
For $m=7, 8,9$, $\gamma_m(K_1)=\frac{2}{3}.$
\end{thm}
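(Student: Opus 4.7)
The upper bound $\gamma_9(K_1) \leq \gamma_8(K_1) \leq \gamma_7(K_1) \leq \gamma_6(K_1) = 2/3$ follows at once from the previous theorem and the fact that $\gamma_m$ is non-increasing in $m$. The task is therefore to prove the matching lower bound $\gamma_9(K_1) \geq 2/3$; the three equalities $\gamma_7=\gamma_8=\gamma_9=2/3$ then follow from monotonicity.

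The strategy is a contradiction argument. Suppose $K_1$ is covered by nine translates $\lambda K_1+\boldsymbol{u}_i$, $i=1,\dots,9$, with $\lambda<2/3$. The first observation is that the $\ell_1$-diameter of each copy equals $2\lambda<4/3$, while every pair of distinct vertices of $K_1$ (namely $\pm\boldsymbol{e}_1,\pm\boldsymbol{e}_2,\pm\boldsymbol{e}_3$) is at $\ell_1$-distance exactly $2$. Hence each copy contains at most one vertex, so at least six of the nine copies are ``committed'' to covering the six vertices of $K_1$, leaving at most three ``free'' copies.

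Next, I would single out the eight facet centres $\boldsymbol{p}_{\boldsymbol{\varepsilon}}=(\varepsilon_1/3,\varepsilon_2/3,\varepsilon_3/3)$ for $\boldsymbol{\varepsilon}\in\{-1,+1\}^3$ as the key test points. A short $\ell_1$-computation shows that every vertex of $K_1$ lies at $\ell_1$-distance at least $4/3$ from every facet centre, with equality exactly when the centre belongs to a facet containing the vertex. Since $2\lambda<4/3$ bounds the $\ell_1$-diameter of any copy, no single copy can contain a vertex and a facet centre together. Consequently, all eight facet centres must be covered using only the at most three free copies.

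The crux — and the step where I expect the real work — is to show that each free copy covers at most two facet centres. The eight facet centres form the vertex set of a cube of edge length $2/3$, whose pairwise $\ell_1$-distances take only the values $2/3$, $4/3$, and $2$. Because the cube graph is bipartite (and hence triangle-free), any three facet centres must include a pair at $\ell_1$-distance at least $4/3$, which already exceeds the $\ell_1$-diameter of any copy. Therefore each free copy carries at most two facet centres, and at most $3\cdot 2=6<8$ centres are covered in total — the desired contradiction. The only non-routine ingredient is this last combinatorial observation, but it reduces transparently to the bipartiteness of the cube graph; every other step is a direct $\ell_1$-distance computation.
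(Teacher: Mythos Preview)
Your argument is correct, and its skeleton matches the paper's proof exactly: six copies are forced to pick up the six vertices, those six copies miss all eight facet centres, and then pigeonhole forces one of the remaining three copies to contain at least three centres, which is impossible when $\lambda<2/3$.

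Where you differ from the paper is in how you justify the two key sub-claims. The paper establishes that a vertex-containing copy misses every facet centre by invoking its structural Lemma~3.2 (the intersections with the four incident facets sit inside homothetic triangles of ratio $\le\lambda$) together with the analysis behind Lemma~3.4; and it proves that no copy with $\lambda<2/3$ can contain three facet centres via a chain of geometric lemmas (Lemma~3.9, Corollary~3.10, Lemma~3.11, Corollary~3.12) that project onto suitable cross-sections. You bypass all of this with two lines of $\ell_1$-arithmetic: every vertex is at $\ell_1$-distance exactly $4/3$ from its four neighbouring facet centres (and farther from the others), and the eight centres $(\pm1/3,\pm1/3,\pm1/3)$ form the vertex set of a cube whose ``distance-$2/3$'' graph is bipartite and hence triangle-free, so any three centres contain a pair at $\ell_1$-distance $\ge 4/3>2\lambda$. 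This is genuinely more elementary and more transparent than the paper's route; the price is only that the $\ell_1$-diameter trick is specific to the crosspolytope, whereas the paper's Lemma~3.2 is phrased in a way that also feeds into the later theorems on $\lambda$-node points for $m\ge 10$.

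One small wording issue: ``at least six of the nine copies are committed'' should be sharpened to ``at least six copies contain a vertex, hence at most three copies are vertex-free, and only those can contain a facet centre.'' Your argument already uses this correctly, but the phrasing could be tightened.
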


When $\gamma_m(K_1)$ is less than some positive number $\lambda$, it is easy to see that the $\lambda$-node points in the facets of $K_1$ can not be covered by the homothetic copies of $K_1$ that contain one vertex of $K_1$. Then we have the following theorems.
\begin{thm}
For $m=10$, $\gamma_m(K_1)=\frac{3}{5}.$
\end{thm}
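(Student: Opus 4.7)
The plan has two halves: construct a covering of $K_1$ by ten translates of $\tfrac{3}{5}K_1$, and show that no strictly smaller ratio admits such a covering.

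For the upper bound, set $\lambda=\tfrac{3}{5}$ and exhibit ten translates of $\lambda K_1$. Six of them are the \emph{vertex translates} $\lambda K_1+(1-\lambda)\boldsymbol v$, one for each vertex $\boldsymbol v\in\{\pm\boldsymbol e_1,\pm\boldsymbol e_2,\pm\boldsymbol e_3\}$; a direct $\ell_1$-check shows that each such translate meets any facet $F$ of $K_1$ incident to $\boldsymbol v$ in the ratio-$\lambda$ homothet of $F$ fixed at $\boldsymbol v$, so the six of them leave uncovered on each facet only the inverted middle triangle whose vertices are the three $\lambda$-node points. The remaining four are the \emph{equatorial translates} $\lambda K_1+\boldsymbol u$ with $\boldsymbol u\in\{(\pm\tfrac{3}{10},\pm\tfrac{3}{10},0)\}$; since each facet of $K_1$ contains exactly one edge in the coordinate plane $\{z=0\}$, these four translates are in one-to-one correspondence with the four pairs of facets sharing a common equatorial edge, and an $\ell_1$-computation shows that each equatorial translate contains all six $\lambda$-node points of its corresponding pair of facets. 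A routine symmetry-reduced check then verifies that the ten translates cover all of $K_1$.

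For the lower bound, again set $\lambda=\tfrac{3}{5}$ and suppose $\lambda'=\gamma_{10}(K_1)<\lambda$; fix a covering of $K_1$ by ten translates of $\lambda' K_1$. The six vertices of $K_1$ have pairwise $\ell_1$-distance $2>2\lambda'$, so each must lie in its own translate, accounting for six of the ten. Moreover, for every vertex $\boldsymbol v$ of $K_1$ and every $\lambda$-node point $\boldsymbol n$ one has $\|\boldsymbol v-\boldsymbol n\|_1\ge 2\lambda>2\lambda'$, so by the triangle inequality no translate of $\lambda' K_1$ containing a vertex can cover any $\lambda$-node point. Hence the remaining four translates must together cover all twenty-four $\lambda$-node points.

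The crux of the argument is the estimate that any translate of $\lambda' K_1$ with $\lambda'<\tfrac{3}{5}$ contains at most five $\lambda$-node points, which together with $4\cdot 5<24$ yields the desired contradiction. A direct calculation shows that the pairwise $\ell_1$-distances among the twenty-four $\lambda$-node points take only the values $\tfrac{2}{5},\tfrac{4}{5},\tfrac{6}{5},\tfrac{7}{5},\tfrac{8}{5},2$, and any two points in a common translate of $\lambda' K_1$ satisfy $\|\boldsymbol p-\boldsymbol q\|_1\le 2\lambda'<\tfrac{6}{5}$. Therefore the set of $\lambda$-node points in a single translate must form a clique in the ``compatibility graph'' whose edges are pairs at $\ell_1$-distance at most $\tfrac{4}{5}$. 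An enumeration shows that every node point has exactly nine neighbours in this graph, and a case analysis of the induced subgraph on those nine neighbours---reduced to a single representative by the transitive action of the hyperoctahedral symmetry group of $K_1$---shows that no four of the nine neighbours themselves form a clique. Consequently the maximum clique has size exactly five, which completes the contradiction.

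The main obstacle is this last clique-enumeration step. I would organize it by first listing the distance-$\tfrac{6}{5}$ ``forbidden pairs'' of $\lambda$-node points, which arise in two geometric families---edge-adjacent pairs with $\|\boldsymbol p-\boldsymbol q\|_1=6-8\lambda=\tfrac{6}{5}$ between two facets sharing an edge, and vertex-adjacent pairs at distance $\tfrac{6}{5}$ between two facets sharing only a vertex---and then verifying that, in the nine-vertex neighbourhood of a fixed representative node point, any putative $5$-clique contains at least one such forbidden pair. The presence of many small cliques in the compatibility graph (for example, the three node points of any facet are pairwise at distance $\tfrac{2}{5}$) makes this bookkeeping the most delicate part of the argument.
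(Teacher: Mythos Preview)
Your upper-bound construction is essentially the paper's: six vertex copies $\tfrac{3}{5}K_1+\tfrac{2}{5}\boldsymbol v$ together with four ``edge'' copies. The paper places the latter at $\pm(\tfrac{2}{5},\tfrac{2}{5},0)$ and $\pm(0,\tfrac{2}{5},\tfrac{2}{5})$, you at $(\pm\tfrac{3}{10},\pm\tfrac{3}{10},0)$; both choices work, and yours has the pleasant feature that all ten translates contain the origin, so covering $\partial K_1$ immediately yields covering $K_1$ via the obvious ray argument.

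Your lower bound is correct but takes a genuinely different route from the paper. The paper argues at the facet level: after six copies are spent on vertices, the remaining four must cover the $\tfrac{3}{5}$-node points of all eight facets; Lemma~4.1 forbids a single copy from covering node points in three different facets, so pigeonhole forces some copy to cover \emph{all six} node points of two adjacent facets, and a direct computation (the rhombus $efe'f'$ sits in no homothety of the cross-section $pm_{ad}qm_{bc}$ of ratio $<\tfrac{3}{5}$) gives the contradiction. You instead work at the point level with the $\ell_1$ metric: a translate of $\lambda'K_1$ has $\ell_1$-diameter $2\lambda'<\tfrac{6}{5}$, so its node points form a clique in the distance-$\le\tfrac{4}{5}$ graph on the $24$ node points, and a symmetry-reduced clique bound finishes. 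Your approach is more elementary and avoids the paper's somewhat informal passage from ``four copies, eight facets'' to ``each copy handles exactly two adjacent facets''; the paper's approach has the advantage that the computation identifying the extremal configuration is geometrically transparent.

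Two small numerical slips, neither fatal: the pairwise distances among the $24$ node points are all even multiples of $\tfrac{1}{5}$, so $\tfrac{7}{5}$ never occurs (just drop it from your list---the key implication ``$<\tfrac{6}{5}\Rightarrow\le\tfrac{4}{5}$'' is unaffected). And the maximum clique is $4$, not $5$: around a representative node point one finds nine neighbours as you say, but a short check shows the induced subgraph on those nine has clique number only $3$. This only \emph{strengthens} your pigeonhole ($4\cdot4=16<24$), so the argument goes through.
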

As a consequence of Theorem 1.4, together with the Pigeonhole principle, we immediately get the following theorem.
\begin{thm}
For $m=11,12,13$, $\gamma_m(K_1)=\frac{3}{5}.$
\end{thm}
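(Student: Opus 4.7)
The plan is to prove $\gamma_m(K_1)\le\tfrac{3}{5}$ by the monotonicity of $\{\gamma_m(K_1)\}$ in $m$, and the reverse inequality by a pigeonhole argument on vertex-containing translates and $\tfrac{3}{5}$-node points, in the spirit of Theorem~1.4.

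For the upper bound, since $m\mapsto\gamma_m(K_1)$ is non-increasing, Theorem~1.4 directly yields
\[
\gamma_{13}(K_1)\le\gamma_{12}(K_1)\le\gamma_{11}(K_1)\le\gamma_{10}(K_1)=\tfrac{3}{5}.
\]
Concretely, one obtains coverings by $m\in\{11,12,13\}$ translates of $\tfrac{3}{5}K_1$ by adjoining $m-10$ arbitrary extra translates to the ten-translate covering of Theorem~1.4.

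For the lower bound it suffices, by monotonicity, to show $\gamma_{13}(K_1)\ge\tfrac{3}{5}$. Arguing by contradiction, suppose $K_1\subseteq\bigcup_{i=1}^{13}(\lambda K_1+\boldsymbol u_i)$ for some $\lambda<\tfrac{3}{5}$. The six vertices of $K_1$ lie pairwise at $\ell^1$-distance $2$ whereas the $\ell^1$-diameter of $\lambda K_1$ equals $2\lambda<\tfrac{6}{5}<2$, so no translate can contain two vertices; at least six of the thirteen translates are thus devoted to the vertex set, leaving at most seven \emph{free} translates that contain no vertex. A short $\ell^1$ computation shows $\|v-p\|_1\ge\tfrac{6}{5}$ for every vertex $v$ of $K_1$ and every $\tfrac{3}{5}$-node point $p$ (indeed, for $p=(1-\tfrac35,1-\tfrac35,2\cdot\tfrac35-1)=(\tfrac25,\tfrac25,\tfrac15)$ on a facet incident to $v=(1,0,0)$ one gets exactly $\tfrac35+\tfrac25+\tfrac15=\tfrac65$). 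Since $\tfrac{6}{5}>2\lambda$, the remark preceding Theorem~1.4 forbids any vertex-containing translate from covering a node point; hence all $24=8\cdot 3$ mutually distinct $\tfrac{3}{5}$-node points must lie in the union of the at most seven free translates.

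The main obstacle is the geometric lemma asserting that seven translates of ratio $\lambda<\tfrac{3}{5}$ cannot collectively contain all $24$ node points. I would prove it by an explicit $\ell^1$ case analysis using the coordinates $(\tfrac25,\tfrac25,\tfrac15)$ and its cyclic and sign permutations, classifying the possible configurations of node points inside one free translate $\lambda K_1+\boldsymbol u$---either all three on one facet (which pins $\boldsymbol u$ near that facet's centroid) or a spread across two edge-sharing facets (which pins $\boldsymbol u$ near their common edge). In each case one shows that the translate ``fully services'' (covers all three node points of) at most one of the eight facets; a pigeonhole over the eight facets then forces at least one facet to retain an uncovered $\tfrac{3}{5}$-node point, contradicting the assumed covering. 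Combined with the upper bound and monotonicity, this gives $\gamma_{11}(K_1)=\gamma_{12}(K_1)=\gamma_{13}(K_1)=\tfrac{3}{5}$.
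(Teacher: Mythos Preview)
Your overall strategy matches the paper's: monotonicity gives the upper bound, and the lower bound comes from separating off six vertex-containing translates, observing that they miss all $\tfrac{3}{5}$-node points, and then running a pigeonhole on the remaining seven translates against the eight facets. That much is exactly what the paper does.

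The gap is in your final step. Showing that each free translate \emph{fully services} (covers all three node points of) at most one facet does \emph{not}, together with ``seven translates, eight facets'', yield a contradiction: a facet's three node points could be split among two or three translates, none of which fully services it, yet all three are covered. So the pigeonhole you state does not close the argument. The paper formulates the pigeonhole the other way around: since each free translate with $\lambda<\tfrac{3}{5}$ can contain node points from at most two facets (Lemma~4.1), and since node points from all eight facets must be covered by only seven translates, at least one translate must contain node points from two distinct facets; the paper then invokes the computation already carried out in the proof of Theorem~1.4 (the $e,f,e',f'$ argument on two adjacent facets) to conclude $\lambda\ge\tfrac{3}{5}$. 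In other words, rather than proving a new ``at most one facet'' lemma and running a forward pigeonhole, the paper reduces directly to the two-adjacent-facets estimate already established. Your $\ell^1$ computations (node-point coordinates, the distance-to-vertex bound) are fine and useful, but you should recast the combinatorial step to match this reduction, or else supply a genuinely complete counting argument that accounts for collaborative coverage of a facet.
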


Since $\gamma_m(K_1)$ is a non-increasing step sequence for all positive integers $m$, the distance between two $\gamma_m(K_1)$ is getting bigger. Then, the $\gamma_m(K_1)$-node points in one facet can not be covered by one homothetic copy of $K_1$, Thus, we get that
\begin{thm}
For $m=14$, $\gamma_m(K_1)=\frac{4}{7}.$
\end{thm}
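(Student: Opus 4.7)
\emph{Upper bound.} I would display an explicit configuration of $14$ translates of $\tfrac{4}{7}K_1$ covering $K_1$. Following the symmetry of $K_1$, take six \emph{vertex caps} $\tfrac{4}{7}K_1+\tfrac{3}{7}v$, one per vertex $v$ of $K_1$, together with eight \emph{facet caps} $\tfrac{4}{7}K_1+\tfrac{1}{3}(\sigma_1,\sigma_2,\sigma_3)$, one per sign pattern $\sigma\in\{\pm1\}^3$, centred at the centroid of the corresponding facet. A direct $\ell_1$ computation shows that inside each octant the three adjacent vertex caps cover $K_1$ except for a small open tetrahedron whose four extremal vertices are the three $\tfrac{4}{7}$-node points on the facet together with an interior apex $\tfrac{1}{7}(\sigma_1,\sigma_2,\sigma_3)$; a second computation shows that the $\ell_1$ distance from the facet centroid to that apex is exactly $\tfrac{4}{7}$ (while the three node points lie at the smaller distance $\tfrac{8}{21}$), so the facet cap contains the tetrahedron by convexity. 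Assembling the octants gives $\gamma_{14}(K_1)\le \tfrac{4}{7}$.

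\emph{Lower bound.} Suppose for contradiction that some $\lambda<\tfrac{4}{7}$ admits a covering of $K_1$ by $14$ translates of $\lambda K_1$. Two routine $\ell_1$-diameter observations constrain the configuration: any two vertices of $K_1$ are at $\ell_1$-distance $2>2\lambda$, so the six vertices must lie in six distinct ``vertex translates''; and the $\ell_1$-distance from a vertex $v$ of $K_1$ to any $\lambda$-node point on a facet incident to $v$ is at least $\tfrac{8}{7}>2\lambda$, so no vertex translate contains a node point. Hence all $3\cdot 8=24$ $\lambda$-node points must be covered by the remaining eight translates. I would then invoke the structural fact flagged just before the theorem: when $\lambda<\tfrac{4}{7}$ the three $\lambda$-node points on any one facet cannot lie in a single translate of $\lambda K_1$. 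Combined with a diameter estimate limiting how many node points from distinct facets a single translate can absorb, a pigeonhole count over the $8$ facets and the $8$ non-vertex translates supplies the contradiction.

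\emph{Main obstacle.} The delicate ingredient is proving the structural claim cleanly. On the facet itself the three node points form a downward-pointing equilateral triangle of side $(2-3\lambda)\sqrt{2}$, and a short 2D argument shows that inscribing this triangle inside an upward-pointing equilateral triangle of side $\lambda\sqrt{2}$ (a $\lambda$-homothet of the facet) requires $\lambda\sqrt{2}\ge 2(2-3\lambda)\sqrt{2}$, i.e.\ $\lambda\ge \tfrac{4}{7}$. Promoting this 2D inequality to a statement about arbitrary three-dimensional translates of $\lambda K_1$---whose trace on the facet plane is in general a hexagonal cross-section rather than a homothetic triangle---is the main technical hurdle, and once this is in place the pigeonhole finish is routine.
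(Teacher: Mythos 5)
Your upper bound is correct and complete as sketched: the six anchored vertex copies $\tfrac47K_1+\tfrac37\boldsymbol v$ leave uncovered, in each closed octant, exactly the tetrahedron spanned by the three $\tfrac47$-node points of the facet and the apex $\tfrac17(\sigma_1,\sigma_2,\sigma_3)$, and since these four points lie at $\ell_1$-distance $\tfrac{8}{21}$ and $\tfrac47$ respectively from the facet centroid $\tfrac13(\sigma_1,\sigma_2,\sigma_3)$, the facet copy swallows the tetrahedron by convexity. This is more explicit and more symmetric than the construction in the paper.

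The lower bound, however, breaks exactly at the point you flagged as the ``main technical hurdle,'' and the problem is not that the 2D-to-3D promotion is delicate --- it is that the structural claim is false. Take the facet $x_1+x_2+x_3=1$, $x\ge 0$, whose $\lambda$-node points are $(1-\lambda,1-\lambda,2\lambda-1)$ and its permutations, and take $\boldsymbol u=(1-\lambda)(1,1,1)$, a point hovering outside $K_1$ above the facet. Then each node point is at $\ell_1$-distance $2-3\lambda$ from $\boldsymbol u$, so $\lambda K_1+\boldsymbol u$ contains all three node points (indeed the whole inverted node triangle) as soon as $2-3\lambda\le\lambda$, i.e. $\lambda\ge\tfrac12$. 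The trace of this translate on the facet plane is the inverted triangle itself, not an upward homothet of the facet, which is precisely why the planar inscribing inequality $\lambda\ge 2(2-3\lambda)$ does not transfer. Your preparatory distance claim also fails for $\lambda$-node points: two of the three node points on a facet incident to a vertex $\boldsymbol v$ lie at $\ell_1$-distance exactly $2\lambda$ from $\boldsymbol v$ (the figure $8/7$ is the distance from a vertex to the nearest \emph{fixed} $\tfrac47$-node point), so ``no vertex translate contains a node point'' would need the closure argument (the remaining translates must cover the open residual region, hence its closure), not a diameter count.

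Worse, the same hovering copies defeat not just your argument but the statement itself. Combining the six anchored vertex copies $\lambda K_1+(1-\lambda)\boldsymbol v$ with the eight copies $\lambda K_1+\tfrac{\lambda}{2}(\sigma_1,\sigma_2,\sigma_3)$, one checks that the residual region in each octant is the tetrahedron with apex $(2\lambda-1)(\sigma_1,\sigma_2,\sigma_3)$ and base the $\lambda$-node triangle, and that all four of its vertices lie at $\ell_1$-distance $3-\tfrac92\lambda$ from $\tfrac{\lambda}{2}(\sigma_1,\sigma_2,\sigma_3)$. Hence the $14$ copies cover $K_1$ whenever $3-\tfrac92\lambda\le\lambda$, i.e. $\lambda\ge\tfrac{6}{11}$, giving $\gamma_{14}(K_1)\le\tfrac{6}{11}<\tfrac47$. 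So no correct proof of the claimed equality can exist; the paper's own lower-bound argument (its case (i), asserting that three node points of a single facet force ratio at least $\tfrac47$) founders on the same hexagonal-cross-section issue you identified.
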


\begin{thm}
For $m=15,16,17$, $\gamma_m(K_1)=\frac{4}{7}.$
\end{thm}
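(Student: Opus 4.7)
The plan is to obtain Theorem~1.7 from Theorem~1.6 in essentially the same way Theorem~1.5 is obtained from Theorem~1.4: the upper bound is immediate from monotonicity of $\gamma_m(K_1)$, and the lower bound is established by a pigeonhole argument on the vertices and facets of $K_1$.

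For the upper bound I would note that $\gamma_m(K_1)$ is a non-increasing step sequence in $m$, so Theorem~1.6 yields $\gamma_m(K_1) \le \gamma_{14}(K_1) = \frac{4}{7}$ for all $m \ge 14$, and in particular for $m \in \{15,16,17\}$. It therefore suffices to show $\gamma_{17}(K_1) \ge \frac{4}{7}$, as monotonicity then forces the same bound for $m = 15, 16$.

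For the lower bound I would argue by contradiction. Assume $K_1 \subseteq \bigcup_{i=1}^{17}(\lambda K_1 + \boldsymbol{u}_i)$ with some $\lambda < \frac{4}{7}$. Since the $\ell_1$-distance between any two vertices of $K_1$ equals $2$ while the $\ell_1$-diameter of $\lambda K_1$ equals $2\lambda < \frac{8}{7} < 2$, no single translate can contain two vertices of $K_1$; so at least $6$ of the translates are ``vertex copies,'' one per vertex of $K_1$, leaving at most $11$ ``non-vertex'' copies for the remaining regions. Next, by the key quantitative step in the proof of Theorem~1.6, for $\lambda < \frac{4}{7}$ the three $\lambda$-node points on any single facet of $K_1$ cannot be covered together by a single non-vertex translate of $\lambda K_1$, so each of the $8$ facets of $K_1$ demands at least $2$ non-vertex copies. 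Charging each non-vertex copy to at most one facet, the non-vertex copies number at least $2 \cdot 8 = 16$; combined with the $6$ vertex copies this forces $m \ge 22$, contradicting $m = 17$.

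The main obstacle in this pigeonhole is justifying the ``at most one facet'' charging, since a non-vertex copy positioned near an edge of $K_1$ might in principle reach the $\lambda$-node points of the two facets sharing that edge. To close this gap I would extract from the proof of Theorem~1.6 a quantitative $\ell_1$-distance lemma: whenever $\lambda < \frac{4}{7}$, no translate of $\lambda K_1$ can simultaneously reach two $\lambda$-node points on one facet and any $\lambda$-node point on an adjacent facet. Once this lemma is in place, the charging rule becomes unambiguous and the pigeonhole step closes the proof in exactly the same fashion as the deduction of Theorem~1.5 from Theorem~1.4.
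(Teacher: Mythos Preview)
Your upper bound via monotonicity matches the paper's argument. The lower bound, however, has a genuine gap in the charging step. Even granting your proposed lemma (that for $\lambda<\tfrac{4}{7}$ no translate can cover two $\lambda$-node points of one facet together with a node point of an adjacent facet), a translate covering \emph{one} node point from each of several facets is not excluded, and such a copy would be charged to only one facet while still contributing to the covers of the others. Concretely, a facet $F$ might have two of its three node points covered by a ``heavy'' copy assigned to $F$, while the third is covered by a copy that also meets a node point of a neighbouring facet $F'$; nothing forces that second copy to be assigned to $F$ rather than to $F'$. So from ``each facet needs at least two copies'' you cannot deduce that $16$ distinct copies are required.

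The paper sidesteps charging entirely by pigeonholing directly on the node points: there are $8\cdot 3=24$ of them and only $11$ non-vertex translates, so some translate contains at least $\lceil 24/11\rceil=3$ node points. The case analysis already carried out in the proof of Theorem~1.6 (supplemented by Lemma~4.1 for three node points spread over three facets) shows that any translate containing three $\tfrac{4}{7}$-node points has ratio at least $\tfrac{4}{7}$, giving the contradiction. This is both shorter and avoids the need for any new lemma; in particular, the ``two from one facet plus one from an adjacent facet'' case you propose to extract is precisely case~(ii) of that analysis, so it is already available and feeds directly into the simpler count.
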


This paper is organized as follows. In Section 2, we show the proof for $m=4,5$; In Section 3, firstly, we will show a lemma that plays an important role in the following theorem proofs. Next, we will show the proofs for $m=6,7,8,9$; In Section 4, we are going to prove the case for $m=10,11,12,13$;
In section 5, we are going to prove the cases for $m=14,15,16,17$.  In the proofs convexity methods are combined
with combinatorial properties of geometry.

\section{Case for $m=4,5$. }

\begin{lem}\label{070327prop1}
 For a positive number $\lambda$, if there exists a translation of $\lambda K_1$ cover two vertices from $K_1$, then $\lambda\geq1$.
\end{lem}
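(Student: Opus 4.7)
The plan is to translate the covering hypothesis into an inequality between vectors and then exploit the central symmetry of $K_1$. Suppose $\lambda K_1 + \boldsymbol u$ contains two distinct vertices $\boldsymbol v_1, \boldsymbol v_2$ of $K_1$. Then there exist $\boldsymbol p_1, \boldsymbol p_2 \in K_1$ such that $\boldsymbol v_i = \lambda \boldsymbol p_i + \boldsymbol u$ for $i=1,2$. Subtracting, $\boldsymbol v_1 - \boldsymbol v_2 = \lambda(\boldsymbol p_1 - \boldsymbol p_2)$. Since $K_1$ is centrally symmetric about the origin, $\boldsymbol p_1 - \boldsymbol p_2 \in K_1 + K_1 = 2K_1$, hence $\boldsymbol v_1 - \boldsymbol v_2 \in 2\lambda K_1$.

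The next step is a short computation. The six vertices of $K_1$ are $(\pm 1,0,0)$, $(0,\pm 1,0)$, $(0,0,\pm 1)$, and one checks that for any two distinct vertices $\boldsymbol v_1,\boldsymbol v_2$ the difference $\boldsymbol v_1 - \boldsymbol v_2$ satisfies $|x_1|+|x_2|+|x_3|=2$; that is, $\boldsymbol v_1-\boldsymbol v_2$ lies on the boundary of $2K_1$ (it is in fact a vertex of $2K_1$). Combining this with the inclusion $\boldsymbol v_1-\boldsymbol v_2 \in 2\lambda K_1$ forces $2\lambda \ge 2$, so $\lambda \ge 1$, as required.

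Equivalently, one can phrase the argument entirely in terms of the $K_1$-norm $\|\boldsymbol x\|_{K_1} = |x_1|+|x_2|+|x_3|$: every translate of $\lambda K_1$ has $K_1$-diameter equal to $2\lambda$, while any two distinct vertices of $K_1$ are at $K_1$-distance exactly $2$. Whichever formulation one adopts, the single non-trivial ingredient is the observation that all $\binom{6}{2}$ pairs of distinct vertices (both the ``antipodal'' pairs such as $(\pm 1,0,0)$ and the ``adjacent'' pairs such as $(1,0,0),(0,1,0)$) realize the same $K_1$-distance $2$; this is a direct case check and is the only step that truly uses the specific geometry of the crosspolytope, so no serious obstacle arises.
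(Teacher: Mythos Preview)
Your proof is correct and is in fact cleaner than the paper's. The paper argues by a two-case split: if the two vertices $\boldsymbol v,\boldsymbol w$ are antipodal (a diagonal of a base square) it uses the Euclidean diameter $d(\lambda K_1+\boldsymbol x)=\lambda\,d(K_1)$, and if they are adjacent (an edge of a base square) it intersects $\lambda K_1+\boldsymbol x$ with a plane parallel to the base square and compares edge lengths of the resulting squares. You bypass this case distinction by working directly with the Minkowski gauge $\|\cdot\|_{K_1}=\|\cdot\|_1$: the central symmetry of $K_1$ gives $K_1-K_1=2K_1$, so any translate of $\lambda K_1$ has $\ell_1$-diameter $2\lambda$, while the single computation $\|\boldsymbol v_1-\boldsymbol v_2\|_1=2$ for every pair of distinct vertices finishes the job. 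Your approach is shorter, avoids geometric case analysis, and generalizes verbatim to the $n$-dimensional crosspolytope; the paper's version is more concretely geometric but gains nothing from the extra work.
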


\begin{proof}
 Let $V=\{\boldsymbol p, \boldsymbol a, \boldsymbol b, \boldsymbol c, \boldsymbol d, \boldsymbol q\}$ be the vertices set of $K_1$ and let $\boldsymbol x$ be a point in $\mathbb{E}^3$. Without loss of generality, we may assume that $\boldsymbol v, \boldsymbol w$ are two vertices of $K_1$ from $V$ that are contained in the translation $\lambda K_1+\boldsymbol x$. By the convexity of $K_1$, the segment $vw$ is covered by $\lambda K_1+\boldsymbol x$.

 {\bf Case 1:} $vw$ is a diagonal of base square $S$. Since $\lambda K_1+\boldsymbol x$ is a translation of $\lambda K_1$, $vw // \lambda vw+\boldsymbol x$ and $vw\subset \lambda K_1+\boldsymbol x$. Therefore, $$|vw|\leq d(\lambda K_1+\boldsymbol x)=\lambda d(K_1)=\lambda|vw|,$$ then $\lambda\geq 1$.

 {\bf Case 2:} $vw$ is an edge of base square $S$. Let $\pi$ be a plane containing segment $vw$ and parallel to base square $S$. And let $S^{'}=\pi \cap (\lambda K_{1}+\boldsymbol x)\supset vw$. Because $\lambda K_1+\boldsymbol x$ is a translation of $\lambda K_1$ and $S//S^{'}$, $Q^{'}=\lambda S+\boldsymbol x$. $S$ is a two-dimensional square, so is $S^{'}$. We know that $vw$ is an edge of the square $S$, and we can assume that $l$ is the edge of $S^{'}$. Then we can get that $l=\lambda vw+\boldsymbol x$. Because $vw$ is also in $S^{'}$,
 $$|vw|\leq |\lambda vw+x|=\lambda|vw|.$$
 We can see  $\lambda\geq1$.
\end{proof}

\begin{proof}[Proof of Theorem $1.1$]
We assume that $K_1 \subset \bigcup_{i=1}^m \gamma_m(K_1)+\boldsymbol u_i$, and $\boldsymbol u_i\in \mathbb{E}^3,i=1,\dots,m$. Since $K_1$ has six vertices, and $m=4$, $5<6$. That is to say, the number of translations of $\lambda K_1$ is less than the number of vertices of $K_1$. By the Pigeonhole principle, there is always a point $\boldsymbol u_i, i\in \{1,2,\dots,m\}$ such that $\gamma_mK_1+\boldsymbol u_i$ covers two vertices of $K_1$. By Lemma 2.1, we can see that $\gamma_m(K_1)=1$.
\end{proof}

\section{Case for $m=6,7,8,9$. }

\begin{defn}
Let $X=\{\boldsymbol u_1, \boldsymbol u_2, \dots, \boldsymbol u_m\}\subset \mathbb{E}^n$. If $\gamma_mK_1+X$ are the homothetic copies of $K_1$ such that
$$K\subseteq \bigcup_{i=1}^m \gamma_mK_1+\boldsymbol u_i,$$
then we call $\gamma_mK+X$ an optimal $m$-covering configuration of $K_1$.
\end{defn}
The following observation is crucial to our work:
\begin{flushleft}
 {\bf Observation :} To construct an optimal covering configuration, $X$ should be contained in $K_1$, that is $\boldsymbol u_i \in K_1$, $i=1, \dots, m$. Let $\lambda K_1+\boldsymbol u$ be the translation of $\lambda K_1$ which covers one of vertices of $K_1$. Without loss of generality, we assume that $\boldsymbol p\in \lambda K_1+\boldsymbol u$. By some computation, there are three possibilities:
 \end{flushleft}

Case 1. $\boldsymbol u\in pq$ or $ac$ or $bd$. The intersections of four facets of $K_1$ which associate to $\boldsymbol p$ with $\lambda K_1+\boldsymbol u$ are four triangles.

Case 2.  $\boldsymbol u\in conv\{\boldsymbol p, \boldsymbol a, \boldsymbol q, \boldsymbol c\} / (pq\cup ac)$ or $conv\{\boldsymbol p, \boldsymbol b, \boldsymbol q, \boldsymbol d\}/(pq\cup bd)$. The intersections of four facets of $K_1$ which associate to $\boldsymbol p$ with $\lambda K_1+\boldsymbol u$ are two triangles and two quadrilaterals.

Case 3. $\boldsymbol u\in K_1/ (conv\{\boldsymbol p, \boldsymbol a, \boldsymbol q, \boldsymbol c\} \cup conv\{\boldsymbol p, \boldsymbol b, \boldsymbol q, \boldsymbol d\})$. The intersections of four facets of $K_1$ which associate to $\boldsymbol p$ with $\lambda K_1+\boldsymbol u$ are two quadrilaterals, one triangle and one pentagon.

\begin{lem} \label{070410}
If $\lambda K_1+X$ is an optimal $m$-covering configuration, and $\boldsymbol u\in X$ such that $\lambda K_1+\boldsymbol u$ covers a vertex $\boldsymbol v$ of $K_1$. Then the intersections of four facets of $K_1$ which associate to $\boldsymbol v$ with $\lambda K_1+\boldsymbol u$ are contained in the homothetic copies of the corresponding triangular facets with ratio not greater than $\lambda$, respectively.
\end{lem}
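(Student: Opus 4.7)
The plan is to fix one facet $F$ among the four incident to $\boldsymbol v$, pass to convenient coordinates using the symmetry of $K_1$, and then deduce the required containment from a single application of the triangle inequality to the two crosspolytope constraints in hand. Concretely, I take $\boldsymbol v=\boldsymbol p=(0,0,1)$ and work with $F=\triangle\boldsymbol p\boldsymbol a\boldsymbol b$ lying in the plane $x_1+x_2+x_3=1$, with $\boldsymbol a=(1,0,0)$ and $\boldsymbol b=(0,1,0)$; the other three facets meeting at $\boldsymbol p$ will be handled by the same argument after flipping the signs of $x_1$ and/or $x_2$ appropriately. In these coordinates the hypothesis $\boldsymbol p\in\lambda K_1+\boldsymbol u$ reads $|u_1|+|u_2|+|1-u_3|\le\lambda$, while any $\boldsymbol x\in F\cap(\lambda K_1+\boldsymbol u)$ satisfies $|x_1-u_1|+|x_2-u_2|+|x_3-u_3|\le\lambda$.

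The key step I plan to take is to add these two inequalities and apply the triangle inequality coordinate by coordinate, namely $|x_i|\le |x_i-u_i|+|u_i|$ for $i=1,2$ and $|x_3-1|\le|x_3-u_3|+|u_3-1|$, to obtain
$$|x_1|+|x_2|+|x_3-1|\le 2\lambda.$$
Since $\boldsymbol x\in F$ forces $x_1,x_2\ge 0$ and $x_3\le 1$, and since $x_1+x_2+x_3=1$ rewrites $1-x_3$ as $x_1+x_2$, the left-hand side collapses to $2(x_1+x_2)$, so the estimate reduces to $x_1+x_2\le\lambda$, equivalently $x_3\ge 1-\lambda$. Geometrically this pins $F\cap(\lambda K_1+\boldsymbol u)$ inside the sub-triangle with vertices $\boldsymbol p$, $\boldsymbol p+\lambda(\boldsymbol a-\boldsymbol p)$, and $\boldsymbol p+\lambda(\boldsymbol b-\boldsymbol p)$, which is exactly the homothet $\lambda F+(1-\lambda)\boldsymbol p$ of $F$ with centre $\boldsymbol p$ and ratio $\lambda$, proving the assertion for this facet. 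Repeating the same computation on each of the three remaining facets at $\boldsymbol p$ (with $x_i$ replaced by $\epsilon_i x_i$ for the sign pattern $\epsilon_1,\epsilon_2\in\{\pm 1\}$ that describes the facet) delivers the three analogous containments.

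I do not anticipate a substantial obstacle: the whole argument reduces to the observation that the two covering-type inequalities couple cleanly by applying the triangle inequality separately in each coordinate, after which everything is routine arithmetic on the plane of $F$. The only mild care needed is in the coordinate setup, so that the three sub-cases of the preceding Observation---whether $\boldsymbol u$ lies on a diagonal of $K_1$, inside a base square, or in general position---are all absorbed into a single uniform computation independent of the exact location of $\boldsymbol u$ in $K_1$.
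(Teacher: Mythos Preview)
Your argument is correct and is genuinely different from the paper's. The paper proceeds geometrically: it invokes the preceding Observation to split into three cases according to the position of $\boldsymbol u$, then, in the hardest case, constructs the enclosing triangles explicitly on each facet and bounds their sizes by projecting onto the plane of the corresponding facet of $\lambda K_1+\boldsymbol u$, comparing midpoint-to-apex distances via parallel lines and similar triangles. Your approach instead exploits the $\ell^1$ description of $K_1$ directly: the single inequality chain $|x_1|+|x_2|+|x_3-1|\le (|x_1-u_1|+|x_2-u_2|+|x_3-u_3|)+(|u_1|+|u_2|+|1-u_3|)\le 2\lambda$ handles all positions of $\boldsymbol u$ uniformly, dispensing with the case split, the projections, and the figures. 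What your route buys is brevity and a proof that visibly does not depend on the optimality hypothesis or on $\boldsymbol u\in K_1$; what the paper's route buys is the picture of the intersection shapes (triangle/quadrilateral/pentagon) and the identification of when equality occurs, which feeds directly into Remark~3.3 and the uniqueness statements used downstream. If you also want that equality analysis, note that your inequality is tight on all four facets simultaneously only when each coordinatewise triangle inequality is an equality, forcing $u_1=u_2=0$ and $u_3=1-\lambda$, i.e.\ $\boldsymbol p=\boldsymbol p'$, in agreement with the paper's Remark.
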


\begin{proof}
Without loss of generality, we can assume that $\boldsymbol v=\boldsymbol p$. Then the four facets of $K_1$ associated to $\boldsymbol p$ are $\Delta pab, \Delta pbc, \Delta pcd, \Delta pda$, respectively. Then by Observation and geometry computation, we know the four facets must intersect $\lambda K_1+\boldsymbol u$, and there are three possibilities.

\begin{figure}[htbp]
\centering
\includegraphics[scale=0.4]{2.png}
\caption{$K_1$}
\end{figure}

Here we only give the proof for the third case, and the other two cases can be proved in a similar way. That is, $\boldsymbol u \in K_1/ (conv\{\boldsymbol p, \boldsymbol a, \boldsymbol q, \boldsymbol c\} \cup conv\{\boldsymbol p, \boldsymbol b, \boldsymbol q,\boldsymbol d\})$, see Figure $2$. The intersections of four facets of $K_1$ which associate to $\boldsymbol p$ with $\lambda K_1+\boldsymbol u$ are two quadrilaterals, one triangle and one pentagon, see Figure $3$.

\begin{figure}[htbp]
\centering
\includegraphics[scale=0.5]{3.png}
\caption{$K_1$ intersects $\lambda K_1+\boldsymbol u$}
\end{figure}

As Figure.$3$, they are the four facets of $K_1$ which contain the vertex $\boldsymbol p$. Denote the four kinds of shadows by $\Phi_1, \Phi_2, \Phi_3, \Phi_4$ respectively. To emphasize their connection to the vertex $\boldsymbol p$, we also denote them by $\Phi_1(\boldsymbol p), \Phi_2(\boldsymbol p), \Phi_3(\boldsymbol p), \Omega_4(\boldsymbol p)$ respectively.
And about the regions of Figure $3$, since $\lambda K_1+\boldsymbol u$ is a translation of $\lambda K_1$, by routine calculation, we have the following results:

$$n_1r_1//ab// \lambda ab+\boldsymbol u, n_2n_3//bc// \lambda bc+\boldsymbol u,$$

$$r_2n_4//cd// \lambda cd+\boldsymbol u, r_3r_4//da// \lambda da+\boldsymbol u.$$

Since $\Phi_2(\boldsymbol p)\subseteq \Delta pbc\cap \lambda K_1+\boldsymbol u$ is a triangle, we only need to construct the triangles in $\Delta pab,\Delta pcd,\Delta pda$ that contain $\Phi_1(\boldsymbol p), \Phi_3(\boldsymbol p), \Phi_4(\boldsymbol p)$, respectively. Let $\boldsymbol r_1^{'}$ be the intersection of $L_{n_1r_1}$ with $pb$; $\boldsymbol r_2^{'}$ be the intersection of $L_{r_2n_4}$ with $pc$; $\boldsymbol r_3^{'}$ and $\boldsymbol r_4^{'},$ be the intersections of $L_{r_3r_4}$ with $pd, pa$, respectively. Then we have
$$\Phi_1(\boldsymbol p)\subset \Delta pn_1r_1^{'},\quad \Phi_2(\boldsymbol p)\subseteq \Delta pn_2n_3,$$
$$\Phi_3(\boldsymbol p)\subset \Delta pr_2^{'}n_4,\quad \Phi_4(\boldsymbol p)\subset \Delta pr_3^{'}r_4^{'}.$$
It is easy to see that these triangles are equilateral. Thus, $\Delta pn_1r_1^{'}, \Delta pn_2n_3$, $\Delta pr_2^{'}n_4,$ $\Delta pr_3^{'}r_4^{'}$ are homotheties of facets $\Delta pab,\Delta pbc,\Delta pcd,\Delta pda$, respectively. To prove the lemma it is sufficient to show that the ratios of these homotheties are not greater than $\lambda$. Let $\boldsymbol p^{'},\boldsymbol a^{'}, \boldsymbol b^{'}, \boldsymbol c^{'}, \boldsymbol d^{'}, \boldsymbol q^{'}$ be the vertices of $\lambda K_1+\boldsymbol u$ corresponding to vertices of $K_1$. We provide the proof in a few stages.

1. $\Delta pr_3^{'}r_4^{'}$ is a homothety of facet $\Delta pda$ with ratio not greater than $\lambda$.

Let $L_{\boldsymbol p^{'}}$ be a line that goes through $\boldsymbol p^{'}$ and parallels to segment $d^{'}a^{'}$ or $da$. And let $\boldsymbol m_{r_3^{'}r_4^{'}}, \boldsymbol m_{d^{'}a^{'}}, \boldsymbol m_{bc}$ be the middle points of $r_3^{'}r_4^{'}, d^{'}a^{'}, bc$, respectively. Let $\pi$ be the plane that contains $\Delta p^{'}d^{'}a^{'}$ and take $\boldsymbol p^{\pi}, \boldsymbol m_{r_3^{'}r_4^{'}}^{\pi}$ as the projection of $\boldsymbol p, \boldsymbol m_{r_3^{'}r_4^{'}}$ onto the plane $\pi$ in the direction of vector $\boldsymbol {m_{bc}p}.$ Then by elementary geometric properties,
 $$p^{\pi}m_{r_3^{'}r_4^{'}}^{\pi}//pm_{r_3^{'}r_4^{'}},
 |p^{\pi}m_{r_3^{'}r_4^{'}}^{\pi}|=|pm_{r_3^{'}r_4^{'}}|.$$
Since $\boldsymbol p\in \lambda K_1+\boldsymbol u$ and the opposite facets of $K_1$ are parallel to each other, $\boldsymbol p^{\pi}$ and $m_{r_3^{'}r_4^{'}}^{\pi}$ lie between the lines $L_{\boldsymbol p^{'}}$ and $L_{d^{'}a^{'}}$ in the plane $\pi$. $\Delta pr_3^{'}r_4^{'}$ is homothety of facets $\Delta pda$, so it is also a homothety of facet $p^{'}d^{'}a^{'}$. By the properties of resemble triangles, $|pm_{r_3^{'}r_4^{'}}|//p^{'}m_{d^{'}a^{'}}$. Then, we have $$p^{\pi}m_{r_3^{'}r_4^{'}}^{\pi}//p^{'}m_{d^{'}a^{'}}, |p^{\pi}m_{r_3^{'}r_4^{'}}^{\pi}|\leq |p^{'}m_{d^{'}a^{'}}|.$$
Thus,
\begin{equation}
|pm_{r_3^{'}r_4^{'}}|\leq |p^{'}m_{d^{'}a^{'}}|.\tag{1}
\end{equation}
Because $\Delta p^{'}d^{'}a^{'}$ is a homothety of facet $\Delta pda$ with ratio $\lambda$, combining with the properties of resemble triangles, $\Delta pr_3^{'}r_4^{'}$ is homothety of facet $\Delta pda$ and the ratio of homothety is less than $\lambda$.

2. $\Delta pn_1r_1^{'}$ is a homothety of facet $\Delta pab$ with ratio not more than $\lambda$; $\Delta pr_2{'}n_4$ is homothety of facet $\Delta pcd$ with ratio not more than $\lambda$.

Since $\lambda K_1+\boldsymbol u$ is a homothety of $K_1$, $p^{'}a^{'}//pn_1\subset pa$. $pn_1\subset \lambda K_1+\boldsymbol u$, then
\begin{equation}
|pn_1|\leq |p^{'}a^{'}|.\tag{2}
\end{equation}
Thus, $\Delta pn_1r_1^{'}$ is a homothety of facet $\Delta pab$ with ratio less than $\lambda$.

In $\Delta pr_2{'}n_4$, $p^{'}d^{'}//pn_4\subset pd$. Also, $pn_4\in \lambda K_1+\boldsymbol u$, then we have
\begin{equation}
|pn_4|\leq |p^{'}d^{'}|. \tag{3}
\end{equation}
Thus, $\Delta pr_2{'}n_4$ is a homothety of faces $\Delta pab$ with ratio less than $\lambda$.

3. $\Delta pn_2n_3$ is a homothety of facet $\Delta pbc$ with ratio not more than $\lambda$.

Similarly to Part 2, for $\Delta pn_2n_3$, $p^{'}b^{'}//pn_2\subset pb$. $pn_2\subset \lambda K_1+\boldsymbol u$, then we have
\begin{equation}
|pn_2|\leq |p^{'}b^{'}|.\tag{4}
\end{equation}
Thus, $\Delta pn_2n_3$ is a homothety of facet $\Delta pbc$ with ratio less than $\lambda$.

Therefore, the proof of the lemma is completed.
\end{proof}

\begin{rem}
The equalities of (1)(2)(3)(4) hold in the same time if and only if when $\boldsymbol p=\boldsymbol p^{'}$, in this case, the intersections of four facets of $K_1$ which associate to the vertex $\boldsymbol v$ with $\lambda K_1+\boldsymbol u$ are contained in the homothety of the corresponding facets with ratio $\lambda$, respectively; When $\boldsymbol p\in int(\lambda K_1+\boldsymbol u)$, then (1)(2)(3)(4) are strict inequalities; That is to say, the intersections of four faces of $K_1$ which associate to the vertex $\boldsymbol v$ with $\lambda K_1+\boldsymbol u$ are contained in the homothety of the corresponding facets with ratio less than $\lambda$, respectively. When $\boldsymbol p\in \partial (\lambda K_1+\boldsymbol u)/ \{\boldsymbol p^{'}\}$, the equalities of (1)(2)(3)(4) can not be hold in the same time.
\end{rem}

The following lemma is easy to prove, so we omit its proof here.
\begin{lem}
If $T$ is an equilateral triangle in the two-dimensional plane, then $\gamma_3(T)=\frac{2}{3}.$
\end{lem}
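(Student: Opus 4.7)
The plan is to establish $\gamma_3(T)=\tfrac{2}{3}$ by proving the upper and lower bounds separately, both hinging on one simple geometric fact about homothets at a vertex.

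For the upper bound $\gamma_3(T)\le\tfrac{2}{3}$, I will exhibit an explicit covering. Let $T$ have vertices $A,B,C$, and for $v\in\{A,B,C\}$ define the vertex-homothet $T_v:=v+\tfrac{2}{3}(T-v)$. Each $T_v$ is a translate of $\tfrac{2}{3}T$, its apex at a vertex of $T$ and its opposite edge parallel to the corresponding edge of $T$ and passing through the centroid of $T$. A short calculation (putting $T$ in coordinates with $A=(0,h)$ and baseline on the $x$-axis, so the centroid has height $h/3$) shows the three homothets $T_A,T_B,T_C$ meet exactly at the centroid and together cover $T$.

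For the lower bound $\gamma_3(T)\ge\tfrac{2}{3}$, I will argue by contradiction: assume three translates $\lambda T+\boldsymbol u_1,\lambda T+\boldsymbol u_2,\lambda T+\boldsymbol u_3$ cover $T$ with $\lambda<\tfrac{2}{3}$. Since the three vertices of $T$ are mutually at distance equal to $d(T)$, the same reasoning as in Lemma 2.1 rules out a single translate containing two vertices when $\lambda<1$; so by pigeonhole the three vertices are covered by three distinct translates. I then claim that if the translate $\lambda T+\boldsymbol u_i$ contains the vertex $v_i$ of $T$, then $(\lambda T+\boldsymbol u_i)\cap T \subseteq T_{v_i}$, where $T_{v_i}=v_i+\lambda(T-v_i)$. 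Granting the claim, the covering hypothesis yields $T\subseteq T_A\cup T_B\cup T_C$; but by the height computation from the previous paragraph, the centroid of $T$ is at height $h/3<(1-\lambda)h$ whenever $\lambda<\tfrac{2}{3}$, so it lies outside each $T_v$, a contradiction.

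The heart of the argument is the containment claim, which is the analogue of Lemma 3.2 (with the observation after Remark 3.3) in this two-dimensional setting. I expect the clean route is a one-dimensional reach estimate along the altitude from $v_i$: placing $v_i$ at the apex with the opposite edge on the baseline, the translate $\lambda T+\boldsymbol u_i$ has total vertical extent $\lambda h$, so $v_i\in\lambda T+\boldsymbol u_i$ forces every point of this translate to have height at least $(1-\lambda)h$. Intersecting with $T$ (whose two edges through $v_i$ already bound $T_{v_i}$ from the sides) gives $(\lambda T+\boldsymbol u_i)\cap T\subseteq T_{v_i}$. The main potential pitfall is sloppiness about orientation, but because we are translating $\lambda T$ (not reflecting it), the apex of each translate necessarily points toward $v_i$, and the height estimate goes through without a case split.
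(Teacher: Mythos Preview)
Your argument is correct. The paper itself omits the proof entirely (``The following lemma is easy to prove, so we omit its proof here''), so there is no paper proof to compare against; what you have written is precisely the standard argument and is the natural two-dimensional prototype of the paper's Lemma~3.2. One small clarification: your closing remark that ``the apex of each translate necessarily points toward $v_i$'' is slightly misleading---nothing forces the apex of $\lambda T+\boldsymbol u_i$ to be near $v_i$---but your height estimate does not actually use that claim. All it uses is that $\lambda T+\boldsymbol u_i$, being a translate of $\lambda T$, has vertical extent exactly $\lambda h$ in the coordinate system with $v_i$ at the top; hence $v_i\in\lambda T+\boldsymbol u_i$ forces the whole translate to lie at height $\ge(1-\lambda)h$, and intersecting with $T$ gives exactly $T_{v_i}$. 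With that wording tweak the proof is clean.
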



\begin{rem}
 It is easy to see that the optimal $3$-covering configuration of $T$ is unique, see Figure 4, and $\boldsymbol t_1$, $\boldsymbol t_2$, $\boldsymbol t_3$ are translative vectors, $\boldsymbol O$ is the centre of $T$.
\end{rem}
\begin{figure}[htbp]
\centering
\includegraphics[scale=0.3]{4.png}
\caption{The unique optimal $3$-covering configuration of $T$}
\end{figure}

\begin{lem}\textrm{(Zong, see\cite{Zong3} ) }
Let $K$ be a three-dimensional convex body, $\mu$ be a real number satisfying $0<\mu<1$, $R$ be a closed region on $\partial(K)$ with boundary $\Gamma$ and a relatively interior point $\boldsymbol I$. If $\Gamma\cup \{I\}\subset \mu K+\boldsymbol y$ holds for some point $\boldsymbol y$, then we have $R\subset \mu K+\boldsymbol y.$
\end{lem}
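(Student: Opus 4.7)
The plan is a contradiction argument that leverages the convexity of both $K$ and $\mu K+\boldsymbol y$. Write $L=\mu K+\boldsymbol y$ for brevity, and suppose for contradiction that there exists $\boldsymbol x_0\in R$ with $\boldsymbol x_0\notin L$. Since $R$ is a closed connected region on $\partial K$ containing the relatively interior point $I\in L$ and the exterior point $\boldsymbol x_0\notin L$, I can join them by a continuous path in $R$; because $L$ is closed in $\mathbb{R}^3$, the first-exit time along this path produces a point $\boldsymbol z\in R\cap\partial K\cap\partial L$. So the assumption would force these three sets to meet, and my goal is to show this is incompatible with the topology of $L\cap\partial K$.

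The key step is to identify $F:=L\cap\partial K$ as a topological closed disk on the sphere $\partial K$. For this I will use that $K\cap L$ is a convex body in $\mathbb{R}^3$ (it is convex, and nonempty because $I\in K\cap L$), so its boundary $\partial(K\cap L)$ is a topological $2$-sphere which splits as
\[
\partial(K\cap L)=(L\cap\partial K)\cup(K\cap\partial L),
\]
the two pieces being glued along the common curve $\partial K\cap\partial L$. The homothety relation $L=\mu K+\boldsymbol y$ with $0<\mu<1$ prevents $L$ from piercing $K$ in two disjoint caps, so each piece is a closed topological disk; in particular $F$ is a closed disk on $\partial K$ whose boundary (inside $\partial K$) is the single Jordan curve $\partial K\cap\partial L$.

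With $F$ a disk containing $\Gamma\cup\{I\}$, the finish is a Jordan-curve argument on the sphere $\partial K$. The Jordan curve $\Gamma\subset F$ splits $\partial K$ into two disk regions, and exactly one of them lies entirely inside $F$. Since $I$ lies in $F$ and in the relative interior of $R$, it must sit in that $F$-side component, and since $R$ is the closure of the component of $\partial K\setminus\Gamma$ containing $I$, I conclude $R\subset F\subset L$, contradicting $\boldsymbol x_0\in R\setminus L$. The main obstacle is the disk-structure step for $F$: one must rule out disconnected or degenerate intersection patterns for $\partial K\cap\partial L$, which is exactly where the homothety assumption $0<\mu<1$ enters essentially. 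Once this topological skeleton is in place, the rest is forced by standard Jordan-curve reasoning on $\partial K$.
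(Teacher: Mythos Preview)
The paper does not prove this lemma; it is quoted from Zong~\cite{Zong3} without argument, so there is no proof in the present paper to compare your attempt against.

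Your outline is reasonable, and the disk structure of $F=L\cap\partial K$ that you flag as the main obstacle can in fact be established (the key being that the homothety center $\boldsymbol y/(1-\mu)$ cannot lie in $\mathrm{int}\,K$ once $L$ meets $\partial K$). The fatal gap is in the last paragraph: you assert that because $I\in F$ and $I$ lies in the relative interior of $R$, the point $I$ ``must sit in that $F$-side component,'' meaning the disk $D\subset F$ bounded by $\Gamma$. This inference is unjustified. Nothing in the hypotheses prevents $I$ from lying in the annulus $F\setminus\overline{D}$, and in that event the component of $\partial K\setminus\Gamma$ containing $I$ is the one that also contains the nonempty set $\partial K\setminus F$, forcing $R\not\subset L$.

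This gap cannot be repaired, because the lemma as written here is false. Take $K$ the closed unit ball, $\mu=\tfrac{99}{100}$, $\boldsymbol y=(0,0,\tfrac12)$. For $p\in\partial K$ one has $|p-\boldsymbol y|^2=\tfrac54-p_3$, hence $p\in\mu K+\boldsymbol y$ iff $p_3\ge \tfrac54-\mu^2\approx 0.27$. Taking $\Gamma=\{p\in\partial K:p_3=\tfrac12\}$ and $I$ any point of $\partial K$ with third coordinate $0.3$, one gets $\Gamma\cup\{I\}\subset\mu K+\boldsymbol y$, yet the region $R=\{p\in\partial K:p_3\le\tfrac12\}$ bounded by $\Gamma$ and containing $I$ includes the south pole $(0,0,-1)\notin\mu K+\boldsymbol y$. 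The applications in this paper are unaffected: in each of them one checks directly that $\Gamma$ coincides with the relative boundary of $(\mu K_1+\boldsymbol y)\cap\partial K_1$, so the annulus is empty and your disk argument does go through; presumably Zong's original formulation encodes a hypothesis of this sort.
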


\begin{proof}[Proof of Theorem $1.2$]
We first prove $\gamma_6(K_1)\leq \frac{2}{3}.$ To show this, we take
$$\Gamma=\{\boldsymbol x=(x_1,x_2,x_3):x_1=\frac{1}{3}, \boldsymbol x\in \partial (K_1)\},$$
$\mu=\frac{2}{3}$, $\boldsymbol y=(\frac{1}{3},0,0)$, and let $R$ denote the part of $\partial (K_1)$ bounded by $\Gamma$ and containing $(1,0,0)$. By Lemma 3.6 we have $R\subset \mu K_1+\boldsymbol y$. Therefore, in this case $K_1$ can be covered by the union of $\mu K_1\pm (\frac{1}{3},0,0)$, $\mu K_1\pm (0,\frac{1}{3},0)$, $\mu K_1\pm (0,0,\frac{1}{3})$, and thus $\gamma_6(K_1)\leq \frac{2}{3}$.

On the other hand, if $\gamma_6(K_1)<\frac{2}{3}$. Since the number of vertices are the same as the number of homothetic copies of $K_1$, by Lemma 2.1 and Pigeonhole principle, to construct an optimal configuration, each homothetic copy should have and only have one vertex of $K_1$. In other words, there are six points $\boldsymbol u_i\in K_1,,(i=1,\dots,6)$ such that $\boldsymbol v_i\in \gamma_6(K_1)+\boldsymbol u_i$ and $K_1\subset \cup_{i=1}^6\gamma_6(K_1)+\boldsymbol u_i$. Then each facet of $K_1$ must intersect with three homothetic copies of $K_1$.

However, take $\Delta pab$ as an illustration, let $\boldsymbol u_1, \boldsymbol u_2, \boldsymbol u_3$ be the three points in $\mathbb{E}^3$ such that $\boldsymbol p\in\gamma_6K_1+\boldsymbol u_1$, $\boldsymbol a\in\gamma_6K_1+\boldsymbol u_2$, $\boldsymbol b\in\gamma_6K_1+\boldsymbol u_3.$ Then we have $\Delta pab\subset \cup_{i=1}^3\gamma_6K_1+\boldsymbol u_i$. By Lemma 3.1 and Remark 3.2, the intersections of $\Delta pab$ with the three homothetic copies are contained in an equilateral triangle with ratio not greater than $\gamma_6(K_1)$, respectively. It is a contradiction to Lemma 3.4. Hence we complete the proof of the theorem.
\end{proof}

\begin{cor}
The optimal $6$-covering configuration of $K_1$ is unique.
\end{cor}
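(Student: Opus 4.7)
The plan is to combine the structural constraints from the proof of Theorem 1.2 with Lemma 3.1, Lemma 3.4 and Remark 3.2 in order to pin down every translation vector in any optimal $6$-covering configuration.

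First, re-run the pigeonhole step from the proof of Theorem 1.2: since $\gamma_6(K_1)=\frac{2}{3}<1$, Lemma 2.1 forbids any single copy from containing two vertices of $K_1$; as $K_1$ has exactly six vertices, each of the six copies $\frac{2}{3}K_1+\boldsymbol u_i$ in an optimal configuration must contain exactly one vertex $\boldsymbol v_i$ of $K_1$, and the copies can be indexed accordingly.

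Next, fix a facet, say $\Delta pab$. It is covered by the three copies associated with $\boldsymbol p,\boldsymbol a,\boldsymbol b$, and by Lemma 3.1 each such copy meets $\Delta pab$ inside an equilateral sub-triangle sharing the corresponding vertex of $\Delta pab$ and having ratio at most $\frac{2}{3}$. Lemma 3.4, together with the uniqueness statement in Remark 3.5, then forces each of these three sub-triangles to have ratio exactly $\frac{2}{3}$ and to be the standard corner sub-triangle at its vertex. Applying this simultaneously to all four facets incident to a fixed vertex $\boldsymbol p$ of $K_1$, the sub-triangle attached to $\boldsymbol p$ in every such facet has ratio $\frac{2}{3}$; this is precisely the simultaneous equality in (1)--(4) of the proof of Lemma 3.1, so by Remark 3.2 the vertex $\boldsymbol p'$ of $\frac{2}{3}K_1+\boldsymbol u_p$ corresponding to $\boldsymbol p$ must satisfy $\boldsymbol p'=\boldsymbol p$. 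The identity $\boldsymbol p=\frac{2}{3}\boldsymbol p+\boldsymbol u_p$ then pins $\boldsymbol u_p=\frac{1}{3}\boldsymbol p$; running the argument over all six vertices gives $\boldsymbol u_i=\frac{1}{3}\boldsymbol v_i$, which is exactly the configuration exhibited in the proof of Theorem 1.2, establishing uniqueness.

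The main subtlety I expect is to verify that the sub-triangle produced by Lemma 3.1 is genuinely anchored at the vertex of $K_1$ being covered, so that the three sub-triangles on a given facet really behave as corner triangles of that facet and the rigidity coming from $\gamma_3(T)=\frac{2}{3}$ applies. This is built into the construction in the proof of Lemma 3.1 (for example $\Delta pr_3'r_4'$ has its apex at $\boldsymbol p$), so the verification reduces to a bookkeeping check across the three cases of the Observation rather than any new geometric input.
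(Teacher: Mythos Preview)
Your proposal is correct and follows essentially the same route as the paper: both arguments rest on Lemma~3.2 (your ``Lemma~3.1''), Remark~3.3 (the equality case $\boldsymbol p=\boldsymbol p'$), and the rigidity of the optimal $3$-covering of a triangle (Lemma~3.4 / Remark~3.5). The paper phrases it as a one-line contradiction---if $X'$ differs from the standard set, then by Remark~3.3 not all four sub-triangles at some vertex attain ratio $\tfrac23$, and Lemma~3.4 leaves a facet uncovered---whereas you run the same logic forward, deducing $\boldsymbol p'=\boldsymbol p$ and hence $\boldsymbol u_p=\tfrac13\boldsymbol p$; your version is simply a more explicit unpacking of the paper's terse argument, and in particular you make visible the role of Remark~3.5, which the paper uses implicitly.
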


\begin{proof}
By Theorem 3.7, $\gamma_6(K_1)=\frac{2}{3}$ and $\frac{2}{3}K_1+\{\pm(\frac{2}{3},0,0),\pm(0,\frac{2}{3},0), \pm(0,0,\frac{2}{3})\}$ is an optimal $6$-covering configuration. If $\frac{2}{3}K_1+X^{'}$ is also an optimal covering configuration, but $X^{'}\neq \{\pm(\frac{2}{3},0,0),\pm(0,\frac{2}{3},0), \pm(0,0,\frac{2}{3})\}$. By Remark 3.2 and Lemma 3.4, there is always a facet of $K_1$ which can not be covered completely. This completes the proof of the corollary.
\end{proof}

\begin{lem}
Let $\boldsymbol c_1, \boldsymbol c_2, \boldsymbol c_3$ be the centres of three facets of $K_1$ that intersect at only one vertex $\boldsymbol v$ for $K_1$, respectively. If $\boldsymbol c_1, \boldsymbol c_2, \boldsymbol c_3\in \lambda K_1+\boldsymbol u$, then $\lambda \geq \frac{2}{3}.$
\end{lem}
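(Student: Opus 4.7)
The plan is to reduce the lemma to a short triangle-inequality estimate, exploiting the central symmetry of $K_1$ and its description as $\{x\in\mathbb{E}^3:|x_1|+|x_2|+|x_3|\le 1\}$. By the symmetries of $K_1$ I may assume $\boldsymbol v=\boldsymbol p$. The four facets of $K_1$ incident to $\boldsymbol p$ are $\Delta pab,\Delta pbc,\Delta pcd,\Delta pda$, so the three centres $\boldsymbol c_1,\boldsymbol c_2,\boldsymbol c_3$ in the hypothesis form a triple of centroids chosen from among these four facets.

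The first step is a combinatorial observation. Among the four facets at $\boldsymbol p$, exactly two pairs meet solely at $\boldsymbol p$: the ``opposite'' pairs $\{\Delta pab,\Delta pcd\}$ and $\{\Delta pbc,\Delta pda\}$; the remaining four pairs each share a full edge from $\boldsymbol p$. A short pigeonhole argument then shows that any three of the four facets must contain at least one opposite pair, since avoiding both opposite pairs permits at most one facet from each, i.e.\ at most two facets in total. Hence I may fix two centres $\boldsymbol c_i,\boldsymbol c_j$ belonging to an opposite pair.

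With coordinates $\boldsymbol p=(0,0,1)$, $\boldsymbol a=(1,0,0)$, $\boldsymbol b=(0,1,0)$, $\boldsymbol c=(-1,0,0)$, $\boldsymbol d=(0,-1,0)$, the four centroids are $\tfrac{1}{3}(\pm 1,\pm 1,1)$, and a direct calculation shows that for either opposite pair the difference of centroids is $(\pm\tfrac{2}{3},\pm\tfrac{2}{3},0)$, whose coordinate-absolute-value sum equals $\tfrac{4}{3}$. The conclusion then follows from the central symmetry of $K_1$: since $K_1=-K_1$, the containment $\boldsymbol c_i,\boldsymbol c_j\in\lambda K_1+\boldsymbol u$ forces
$$\boldsymbol c_i-\boldsymbol c_j=(\boldsymbol c_i-\boldsymbol u)-(\boldsymbol c_j-\boldsymbol u)\in \lambda K_1-\lambda K_1=2\lambda K_1,$$
which by the definition of $K_1$ amounts to $|c_{i,1}-c_{j,1}|+|c_{i,2}-c_{j,2}|+|c_{i,3}-c_{j,3}|\le 2\lambda$. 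Combined with the value $\tfrac{4}{3}$ above, this yields $\tfrac{4}{3}\le 2\lambda$, i.e.\ $\lambda\ge\tfrac{2}{3}$.

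I do not anticipate a substantial obstacle. The central-symmetry step and the coordinate calculation of the centroid difference are routine; the only point that warrants careful verification is the combinatorial claim that any three of the four facets at $\boldsymbol v$ must contain one of the two opposite pairs.
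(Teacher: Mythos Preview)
Your proof is correct, and it takes a genuinely different route from the paper's.

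The paper argues geometrically: after fixing $\boldsymbol v=\boldsymbol p$ it observes that the three centroids all lie in the horizontal plane $x_3=\tfrac{1}{3}$, so the triangle $\Delta c_1c_2c_3$ sits inside the horizontal cross--section of $\lambda K_1+\boldsymbol u$, which is a translate of a square homothetic to the base square $abcd$ with ratio $\lambda'\le\lambda$. An elementary two--dimensional containment argument then shows that the smallest such square containing $\Delta c_1c_2c_3$ has ratio $\tfrac{2}{3}$, whence $\tfrac{2}{3}\le\lambda'\le\lambda$.

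Your argument bypasses the slicing entirely. You isolate, via a neat pigeonhole step, an ``opposite'' pair of facets at $\boldsymbol p$ among the three given ones, compute the $\ell^1$--distance $\tfrac{4}{3}$ between their centroids, and then use the central symmetry $K_1=-K_1$ to get $\boldsymbol c_i-\boldsymbol c_j\in 2\lambda K_1$, i.e.\ $\|\boldsymbol c_i-\boldsymbol c_j\|_1\le 2\lambda$. This is shorter and more conceptual: it exploits directly that $K_1$ is the unit ball of the $\ell^1$--norm, so containment questions for translates of $\lambda K_1$ are just norm inequalities. As a bonus, your argument really only uses two of the three centres (the opposite pair), so it simultaneously proves the paper's subsequent corollary about two facets meeting only at $\boldsymbol v$; in the paper that corollary is stated without proof as a consequence of the lemma, and indeed a close reading of the paper's argument shows that only the two ``extreme'' centroids $\boldsymbol c_1,\boldsymbol c_3$ are doing the work there as well. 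What the paper's approach buys is a template (pass to a planar cross--section and compare with a homothet of a base polygon) that it reuses in later lemmas; your difference--body trick is cleaner here but is specific to centrally symmetric $K$.
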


\begin{proof}
Without loss of generality, we can assume that $\boldsymbol v=\boldsymbol p$. By the symmetry of $K_1$, we let $\Delta pab, \Delta pbc, \Delta pcd$ be the three facets of the hypothesis and correspond to the centres $\boldsymbol c_1, \boldsymbol c_2, \boldsymbol c_3$. Since $\boldsymbol c_1, \boldsymbol c_2, \boldsymbol c_3\in \lambda K_1+\boldsymbol u$, by the convexity of $K_1$,
$$conv\{\boldsymbol c_1, \boldsymbol c_2, \boldsymbol c_3\}=\Delta c_1c_2c_3\subset \lambda K_1+\boldsymbol u. $$
Moreover, $\boldsymbol c_1$, $\boldsymbol c_2$,$\boldsymbol c_3$ are centres of facets $\Delta pab, \Delta pbc, \Delta pcd$, so the plane $\pi$ that contains $\Delta c_1c_2c_3$ is parallel to the plane $\beta$ that contains base square $abcd$. Then $\Delta c_1c_2c_3$ is contained in a square $S\subset \lambda K_1+\boldsymbol u$, where $S$ is a homothety of base square $abcd$ with ratio $\lambda^{'}$. That is to say,
$$\Delta c_1c_2c_3\subset S\subset \lambda K_1+\boldsymbol u,$$
hence $\lambda^{'}\leq \lambda$. By elementary geometry property, the smallest homothety of base square $abcd$ that contains $\Delta c_1c_2c_3$ is unique, and we get that the smallest ratio of homothety of base square $abcd$ that contains $\Delta c_1c_2c_3$ is $\frac{2}{3}$. Hence $\frac{2}{3}\leq \lambda^{'}\leq \lambda$. This completes the proof of the lemma.
\end{proof}

\begin{cor}
Let $\boldsymbol c_1, \boldsymbol c_2$ be the centres of two facets of $K_1$ that intersect at only one vertex $\boldsymbol v$ for $K_1$, respectively. If $\boldsymbol c_1, \boldsymbol c_2\in \lambda K_1+\boldsymbol u$, then $\lambda \geq \frac{2}{3}.$
\end{cor}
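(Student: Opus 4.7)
The plan is to reduce to a planar problem by slicing with a well-chosen plane, essentially imitating the strategy of Lemma~3.9 but with only two constraint points instead of three. First I would invoke the symmetry of $K_1$ to normalize: assume $\boldsymbol v = \boldsymbol p = (0,0,1)$, and since the two facets share only the vertex $\boldsymbol p$, they must be the ``opposite'' pair among the four facets meeting at $\boldsymbol p$, so without loss of generality they are $\Delta pab$ and $\Delta pcd$ with $a,b,c,d$ the standard labels of the base square $abcd$ in the plane $x_3=0$. A direct computation gives the two centres
$$\boldsymbol c_1 = (1/3,\,1/3,\,1/3), \qquad \boldsymbol c_2 = (-1/3,\,-1/3,\,1/3),$$
both of which lie in the plane $\pi:x_3=1/3$ parallel to $abcd$.

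Next I would analyze the slice $S := \pi \cap (\lambda K_1 + \boldsymbol u)$. Writing $\boldsymbol u = (u_1,u_2,u_3)$, the defining inequality $|x_1-u_1|+|x_2-u_2|+|x_3-u_3|\le \lambda$ of $\lambda K_1+\boldsymbol u$ restricted to $x_3=1/3$ becomes $|x_1-u_1|+|x_2-u_2|\le \lambda - |1/3-u_3|$. Thus $S$ is a translate of a homothetic copy of the base square $abcd$ with ratio $\lambda':=\lambda-|1/3-u_3|\le \lambda$, and by hypothesis it must contain both $\boldsymbol c_1$ and $\boldsymbol c_2$.

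Finally, I would bound $\lambda'$ from below by a short $\ell_1$ calculation. Writing the two containment conditions as
$$|1/3-u_1|+|1/3-u_2|\le \lambda',\qquad |1/3+u_1|+|1/3+u_2|\le \lambda',$$
and adding them, the elementary inequality $|1/3-t|+|1/3+t|\ge 2/3$ applied to $t=u_1$ and $t=u_2$ gives $2\lambda' \ge 4/3$, hence $\lambda \ge \lambda' \ge 2/3$, as required. There is no substantive obstacle here; the only care needed is to pick the correct pair of opposite facets using the symmetry of $K_1$ and to identify the slice $S$ as a homothet of the base square so that the reduction $\lambda'\le \lambda$ is automatic.
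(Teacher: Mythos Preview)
Your proof is correct and follows essentially the same route as the paper. The paper states this result as a corollary of Lemma~3.9 without a separate proof; the intended argument is exactly the slice-and-reduce strategy you carried out: intersect $\lambda K_1+\boldsymbol u$ with the plane through the two centres parallel to the base square $abcd$, observe that this slice is a homothet of $abcd$ with ratio $\lambda'\le\lambda$, and then check that the smallest such homothet containing both centres has ratio $2/3$. Your explicit $\ell_1$ computation (adding the two containment inequalities and using $|1/3-t|+|1/3+t|\ge 2/3$) is a clean realization of what the paper calls an ``elementary geometry property,'' and in fact shows why only the two opposite centres $\boldsymbol c_1,\boldsymbol c_3$ already force $\lambda\ge 2/3$ in the three-point Lemma~3.9.
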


\begin{lem}
Let $\boldsymbol c_1, \boldsymbol c_2$ be the centres of two facets of $K_1$ that do not intersect. If $\boldsymbol c_1, \boldsymbol c_2\in \lambda K_1+\boldsymbol u$, then $\lambda\geq 1.$
\end{lem}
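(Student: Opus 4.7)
The plan is to exploit the combinatorial structure of the octahedron together with a width/diameter argument analogous to the one used in Lemma 2.1. First I would classify which pairs of facets of $K_1$ are disjoint. Since $K_1$ has eight triangular facets, naturally indexed by sign vectors $(\epsilon_1,\epsilon_2,\epsilon_3)\in\{\pm 1\}^3$ (the facet with signs $\epsilon$ has vertices $\epsilon_1\boldsymbol e_1,\epsilon_2\boldsymbol e_2,\epsilon_3\boldsymbol e_3$), two facets share an edge when their sign vectors differ in one coordinate, share a vertex when they differ in two coordinates, and fail to intersect only when they differ in all three coordinates. Hence the hypothesis forces $F_1$ and $F_2$ to be a pair of opposite (antipodal, centrally symmetric) facets. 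Without loss of generality take $F_1=\Delta pab$ and $F_2=\Delta qcd$, so $\boldsymbol c_2=-\boldsymbol c_1$, and the planes $\pi_1,\pi_2$ carrying $F_1,F_2$ are parallel.

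Next I would use convexity: since $\boldsymbol c_1,\boldsymbol c_2\in \lambda K_1+\boldsymbol u$, the whole segment $c_1c_2$ lies in $\lambda K_1+\boldsymbol u$. The key geometric observation is that $c_1c_2$ is perpendicular to both $\pi_1$ and $\pi_2$, and its length $|c_1c_2|$ equals the distance between those two parallel planes, which is precisely the width $w(K_1)$ of $K_1$ in the direction normal to $F_1$. This is the exact analogue of Case 1 in Lemma 2.1, where the segment joining two vertices realized the diameter; here the segment joining two opposite facet centers realizes a particular width of $K_1$.

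Finally I would conclude by the same inequality chain as in Lemma 2.1: since $\lambda K_1+\boldsymbol u$ is a translate of $\lambda K_1$, its width in the direction of $c_1c_2$ is exactly $\lambda\, w(K_1)=\lambda|c_1c_2|$. Any segment it contains in that direction has length at most $\lambda|c_1c_2|$, so
\[
|c_1c_2|\le \lambda|c_1c_2|,
\]
forcing $\lambda\ge 1$. Alternatively one can phrase the last step via the projection of $\lambda K_1+\boldsymbol u$ onto the line $L_{c_1c_2}$, exactly in the spirit of the plane-projection argument used in Case 2 of Lemma 2.1.

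I do not anticipate a genuine obstacle: the essential content is the combinatorial fact that non-intersecting facets of the octahedron must be antipodal, after which the proof reduces to a one-line width (or perpendicular-distance) comparison. The only small care needed is justifying that the segment $c_1c_2$ is indeed perpendicular to $\pi_1,\pi_2$ and saturates the corresponding width of $K_1$, which follows from the central symmetry of $K_1$ about the origin and the fact that the centroid of a facet of $K_1$ is its foot of perpendicular from the origin.
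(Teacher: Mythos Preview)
Your proof is correct. Both you and the paper begin by identifying that the only pairs of disjoint facets of $K_1$ are antipodal ones (the paper simply takes $\Delta pab$ and $\Delta qcd$ without the sign-vector bookkeeping), and both then use convexity to get $c_1c_2\subset\lambda K_1+\boldsymbol u$. The arguments diverge in the final step. The paper follows the same planar-section pattern it uses in Lemma~3.8: it places $c_1c_2$ inside the rhombus $pm_{ab}qm_{cd}$ (a central cross-section of $K_1$), notes that any homothety of this rhombus contained in $\lambda K_1+\boldsymbol u$ has ratio at most $\lambda$, and then checks that the smallest homothetic copy of the rhombus containing $c_1c_2$ is the rhombus itself---which amounts to the observation that $c_1$ and $c_2$ sit on opposite edges of that rhombus with $c_1c_2$ perpendicular to those edges. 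You instead work directly in $3$D with the width of $K_1$ in the direction normal to the two facets, obtaining $|c_1c_2|\le\lambda|c_1c_2|$ in one line. Your route is a bit cleaner and makes the content of the inequality transparent; the paper's route has the advantage of uniformity with the cross-section arguments used elsewhere in Section~3 and Section~4.
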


\begin{proof}
Without loss of generality, let $\Delta pab, \Delta qcd$ be the two facets of $K_1$ that satisfies the above assumption. Let $\boldsymbol m_{ab}, \boldsymbol m_{cd}$ be the middle points of the segments $ab, cd$, then the segment $c_1c_2$ lies in quadrilateral $pm_{ab}qm_{cd}$. Since the convexity of $K_1$, the segment $c_1c_2\in \lambda K_1+\boldsymbol u$. Moreover, $\lambda K_1+\boldsymbol u$ is a homothety of $K_1$, there is a quadrilateral $Q\in \lambda K_1+\boldsymbol u$ containing segment $c_1c_2$ and being a homothety of quadrilateral $pm_{ab}qm_{cd}$. Same as the proof of Lemma 3.9, by elementary geometry property, the smallest homothety of $pm_{ab}qm_{cd}$ with $c_1c_2$ in it is $pm_{ab}qm_{cd}$ itself. That is to say, $\lambda \geq 1$. Then we complete the proof of lemma.
\end{proof}

\begin{cor}
If $\lambda K_1+\boldsymbol u$ contains three centres of facets of $K_1$, then $\lambda$ is not less than $\frac{2}{3}.$
\end{cor}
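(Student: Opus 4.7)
The plan is to reduce the corollary to Lemmas~3.9 and~3.11 and Corollary~3.10 by a short combinatorial analysis of how three facets of $K_1$ can meet. Any two facets of $K_1$ fall into exactly one of three cases: they share an edge, they share exactly one vertex of $K_1$, or they are disjoint (the last occurring precisely when the two facets are opposite, i.e.\ when their vertex sets are antipodal on the three coordinate axes of $K_1$).

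First I would dispose of the disjoint case: if two of the three chosen facets are disjoint, then Lemma~3.11 applied to the corresponding pair of centres already yields $\lambda\geq 1$, and in particular $\lambda\geq \tfrac{2}{3}$. From here on I may assume that every pair of the three facets meets.

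If the three facets share a common vertex of $K_1$, then Lemma~3.9 directly gives $\lambda\geq \tfrac{2}{3}$. Otherwise the plan is to show that, under the standing assumption, no two of the three facets can share an edge; once this is established, every pair of facets intersects in exactly one vertex of $K_1$, and Corollary~3.10 applied to any such pair closes the argument.

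The only nontrivial point, and the main (though still modest) obstacle, is verifying this last claim. I would argue by contradiction: suppose $F_1$ and $F_2$ share an edge with endpoints $\boldsymbol v_A,\boldsymbol v_B$, and let $\boldsymbol v_i$ be the third vertex of $F_i$. Since $\boldsymbol v_A,\boldsymbol v_B$ lie on two of the three coordinate axes of $K_1$, the vertices $\boldsymbol v_1$ and $\boldsymbol v_2$ both lie on the remaining axis, and since $F_1\neq F_2$ they must be antipodal. Now $F_3$ has to meet each of $F_1$ and $F_2$, but it cannot contain $\boldsymbol v_A$ or $\boldsymbol v_B$ (else these would be common to all three). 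This forces $F_3$ to contain both of the antipodal vertices $\boldsymbol v_1,\boldsymbol v_2$, contradicting the elementary fact that no facet of $K_1$ contains two antipodal vertices. This establishes the claim and completes the plan.
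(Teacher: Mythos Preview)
Your proof is correct and carries out precisely the case analysis the paper implicitly intends by labelling this a corollary of Lemmas~3.9, 3.11 and Corollary~3.10; the paper gives no separate argument beyond that. One minor remark: your sub-case~B2 can be shortened by observing that the facet-adjacency graph of $K_1$ is the (bipartite) $3$-cube and hence triangle-free, so among any three facets some pair fails to share an edge, and Corollary~3.10 or Lemma~3.11 applied to that pair already finishes --- Lemma~3.9 is never actually needed.
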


\begin{proof}[Proof of Theorem $1.3$]
Since the sequence $\gamma_m(K_1)$ is nonincreasing,
$$\gamma_9(K_1)\leq \gamma_8(K_1)\leq \gamma_7(K_1) \leq \gamma_6(K_1)\leq \frac{2}{3}.$$

In the other side, suppose the contrary, name that $\gamma_9(K_1) < \frac{2}{3}$. By the definition of $\gamma_m(K_1)$, we can assume that $K_1\subset \cup_{i=1}^{9}\gamma_9K_1+\boldsymbol u_i$. Then the vertices and facets of $K_1$ are completely covered by these homothetic copies. By Lemma 2.1, there are always six homothetic copies of $K_1$, each of them contains one vertex of $K_1$. For the sake of narrative, in the proof of this theorem we are going to let $\boldsymbol v_i, (i\in \{1,2,\dots,6\})$ represent the vertex of $K_1$ for the moment. Then we can assume that $\boldsymbol v_i\in \gamma_9K_1+\boldsymbol u_i, (i\in\{1,2,\dots,6\})$. Since the assumption $\gamma_9(K_1)<\frac{2}{3}=\gamma_6(K_1)$, by Lemma 3.2 and Theorem 3.7, eight facets of $K_1$ can not be completely covered by these six homothetic copies. Furthermore, by the proof of lemma 3.4, the eight centres of facets of $K_1$ can not be covered. In this case, the last three homothetic copies of $K_1$ have to contain these eight centres. Thus, by Corollary 3.12 and the Pigeonhole principle, there is always a homothetic copy of $K_1$ containing three centres of facets of $K_1$. Therefore, the ratio of homothetic copy $\gamma_9K_1+\boldsymbol u_i$ can not be less than $\frac{2}{3}$. This is a contradiction to the hypothesis. Thus, $$\gamma_9(K_1)=\frac{2}{3}.$$
This implies
$\gamma_{9}(K_1)=\gamma_{8}(K_1)=\gamma_{7}(K_1) =\gamma_{6}(K_1)=\frac{2}{3}.$

\end{proof}

\section{Case for $m=10, 11, 12, 13.$ }

\begin{lem}
Let $\lambda K_1+\boldsymbol u$ be a homothetic copy of $K_1$ containing three $\eta$-node points from three different facets of $K_1$, then $\lambda\geq \eta.$
\end{lem}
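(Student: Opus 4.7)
The plan is to exploit that $K_1$ is the closed unit ball of the $\ell_1$-norm on $\mathbb{E}^3$: the translate $\lambda K_1+\boldsymbol u$ contains a point $\boldsymbol Q$ if and only if $|Q_1-u_1|+|Q_2-u_2|+|Q_3-u_3|\le\lambda$. Applied to the three $\eta$-node points $\boldsymbol Q_1,\boldsymbol Q_2,\boldsymbol Q_3$ that lie in $\lambda K_1+\boldsymbol u$, the triangle inequality in $\ell_1$ gives $\|\boldsymbol Q_i-\boldsymbol Q_j\|_1\le 2\lambda$ for every $i\ne j$, so the desired bound $\lambda\ge\eta$ will follow once I can exhibit one pair of indices with $\|\boldsymbol Q_i-\boldsymbol Q_j\|_1\ge 2\eta$.

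Using coordinates $\boldsymbol p=(0,0,1)$, $\boldsymbol q=-\boldsymbol p$, $\boldsymbol a=(1,0,0)$, $\boldsymbol b=(0,1,0)$, $\boldsymbol c=-\boldsymbol a$, $\boldsymbol d=-\boldsymbol b$, every $\eta$-node point on a facet with vertex set $\{\boldsymbol v_i,\boldsymbol v_j,\boldsymbol v_k\}$ is the convex combination $(1-\eta)\boldsymbol v_i+(1-\eta)\boldsymbol v_j+(2\eta-1)\boldsymbol v_k$, so its weight on any vertex of its facet lies in $\{1-\eta,2\eta-1\}$. The crucial local computation, which is the $\eta$-node analogue of Lemma 3.11, is: if $F$ and $F'$ are two facets sharing exactly one apex (say $\boldsymbol p$, so $\{F,F'\}=\{\Delta pab,\Delta pcd\}$ or $\{\Delta pbc,\Delta pda\}$), substituting the two facet equations $x+y+z=1$ and $-x-y+z=1$ into $\|\boldsymbol Q-\boldsymbol Q'\|_1$ collapses it to
\[
\|\boldsymbol Q-\boldsymbol Q'\|_1 \;=\; 2-2\min(Q_z,Q'_z)\qquad\text{for all }\boldsymbol Q\in F,\ \boldsymbol Q'\in F'.
\]
Since the weight of an $\eta$-node point on $\boldsymbol p$ is exactly $Q_z\in\{1-\eta,2\eta-1\}$, and in the regime $\eta\le\tfrac{2}{3}$ that is relevant to this paper both candidate values are at most $1-\eta$, this formula yields $\|\boldsymbol Q-\boldsymbol Q'\|_1\ge 2-2(1-\eta)=2\eta$. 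The symmetric statement for two facets sharing only $\boldsymbol q$ is immediate.

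It then suffices to verify the combinatorial claim that among any three pairwise distinct facets of $K_1$ one always finds either (i) a pair of opposite facets---in which case a one-line variant of the above calculation gives $\|\boldsymbol Q_i-\boldsymbol Q_j\|_1=2\ge 2\eta$, exactly as in Lemma 3.11---or (ii) a pair of facets sharing exactly one apex, to which the formula above applies. Three sub-cases must be ruled out: two of the three facets are already opposite; all three facets meet at a common apex, in which case any three of the four facets at $\boldsymbol p$ contain one of the apex-only pairs $\{\Delta pab,\Delta pcd\}$ or $\{\Delta pbc,\Delta pda\}$; or the three facets share no common vertex and no opposite pair, in which case a pigeonhole argument on the two apexes $\boldsymbol p,\boldsymbol q$ forces two facets to lie at the same apex and share only that apex, since otherwise the remaining facet (at the other apex) would be opposite to one of them. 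The main obstacle is this short combinatorial enumeration; once it is established, the $\ell_1$ distance formula closes the proof in a single line.
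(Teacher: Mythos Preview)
Your proof is correct and takes a genuinely different route from the paper's. The paper argues by first asserting (without justification) that the minimal enclosing homothet occurs when the three facets share a vertex and the chosen node points are the ones ``close to'' that vertex, and then computes that the smallest homothety of the base square $abcd$ containing the triangle $\Delta efg$ has ratio $\eta$. Your argument is both cleaner and more rigorous: you replace the ``find the minimal enclosing copy'' step by the trivially necessary diameter bound $\|\boldsymbol Q_i-\boldsymbol Q_j\|_1\le 2\lambda$, and then exhibit a pair of node points at $\ell_1$-distance at least $2\eta$. Your closed formula $\|\boldsymbol Q-\boldsymbol Q'\|_1=2-2\min(Q_z,Q'_z)$ for facets meeting only at $\boldsymbol p$ is correct, and your combinatorial enumeration (that any three facets contain either an opposite pair or a pair sharing exactly one vertex) is correct as well --- dually, it is the statement that the cube graph contains no triangle.

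Two small remarks. First, your distance formula is derived for facets meeting only at $\boldsymbol p$ (and, as you note, at $\boldsymbol q$ by symmetry), but in your sub-case ``all three facets meet at a common apex'' the common vertex may be one of $\boldsymbol a,\boldsymbol b,\boldsymbol c,\boldsymbol d$; you should say explicitly that the vertex-transitivity of $K_1$ lets you replace $z$ by the appropriate coordinate there. Second, you are right to flag the restriction $\eta\le\tfrac23$: the lemma as stated is in fact false for $\eta>\tfrac23$ (choose the node points with $p$-weight $2\eta-1$ on $\Delta pab,\Delta pbc,\Delta pcd$; they fit inside $\lambda K_1+(0,0,2\eta-1)$ with $\lambda=2(1-\eta)<\eta$). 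The paper's own proof therefore also relies tacitly on this hypothesis, and since every application in the paper has $\eta\in\{\tfrac35,\tfrac47\}$, no harm is done.
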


\begin{proof}
It is easy to see that to prove this lemma we only need to find the the smallest ratio of homothetic copy of $K_1$ that satisfies the above condition. Clearly, the smallest case is when the three facets have one vertex in common. Let $F_1, F_2, F_3$ be three facets of $K_1$ with one vertex $\boldsymbol p$ of $K_1$ in common, and let $\boldsymbol e, \boldsymbol f, \boldsymbol g$ be three $\eta$-node points in facet $F_1, F_2, F_3$ respectively, and they are three $\eta$-node points close to $\boldsymbol p$. By elementary calculation, $\boldsymbol e, \boldsymbol f, \boldsymbol g$ must be on the boundary of this smallest homothetic copy of $K_1$. By the convexity of $K_1$, the convex hull $\Delta efg$ of $\{\boldsymbol e, \boldsymbol f, \boldsymbol g\}$ must be contained in the smallest homothetic copy of $K_1$. The plane that contains $\Delta efg$ is parallel to the base square $abcd$, so $\Delta efg$ is contained in a homothety of $abcd$. By the simple calculation, the ratio of the smallest homothety of quadrilateral $abcd$ that contains $\Delta efg$ is $\eta$. This implies the ratio of the smallest homothetic copy of $K_1$ that contains $\boldsymbol e, \boldsymbol f, \boldsymbol g$ is $\eta$. Thus, $\lambda\geq \eta.$
\end{proof}

\begin{proof}[Proof of Theorem $1.4$]
By Lemma 3.6, we first take
$$\Gamma_1=\{\boldsymbol x=(x_1,x_2,x_3):x_1=\frac{2}{5}, \boldsymbol x\in \partial (K_1)\},$$
$$\Gamma_2=\{\boldsymbol x=(x_1,x_2,x_3):x_1\in (\frac{1}{5}, \frac{4}{5}), x_2=\frac{1}{5} \text{\quad or}\quad x_1=\frac{1}{5}, x_2\in (\frac{1}{5}, \frac{4}{5}), \boldsymbol x\in \partial (K_1)\},$$
$\mu=\frac{3}{5}$, $\boldsymbol y=(\frac{2}{5},0,0), \boldsymbol y^{'}=(\frac{2}{5},\frac{2}{5},0)$, and let $R_1, R_2$ denote the part of $\partial (K_1)$ bounded by $\Gamma_1, \Gamma_2$ and containing $(1,0,0), (\frac{1}{2}, \frac{1}{2}, 0)$, respectively. By Lemma 3.6 we have $R_1\subset \mu K_1+y, R_2\subset \mu K_1+y^{'}$. Therefore, in this case, $K_1$ can be covered by the union of $\mu K_1\pm (\frac{2}{5},0,0)$, $\mu K_1\pm (0,\frac{2}{5},0)$, $\mu K_1\pm (0,0,\frac{2}{5})$, $\mu K_1\pm (\frac{2}{5},\frac{2}{5},0)$, $\mu K_1\pm (0,\frac{2}{5},\frac{2}{5})$, and thus $\gamma_{10}(K_1)\leq \frac{3}{5}$.

In the other side, we assume that $\gamma_{10}(K_1)<\frac{3}{5}$. By Lemma 2.1, we know that there are always six homothetic copies of $K_1$, each of them contains one vertex of $K_1$. Let $\boldsymbol v_i, i\in\{1,2,\dots,6\}$ be vertices of $K_1$, $\boldsymbol u_j, (j\in\{1,2,\dots,10\})$ such that $K_1\subseteq_{j=1}^{10}\gamma_{10}K_1+\boldsymbol u_j$. Without loss the generality, we let $\boldsymbol v_i\in \gamma_{10}K_1+\boldsymbol u_j$, $(i,j\in \{1,2, \dots, 6\})$. Since $\gamma_{10}(K_1)<\frac{3}{5}<\frac{2}{3}$, $K_1$ can not be covered completely by these six homothetic copies of $K_1$. And by Lemma 3.4, all facets of $K_1$ can not be covered completely. Furthermore, by calculation, every centre and $\gamma_{10}(K_1)$-node points in each facet of $K_1$ must be covered by the last four homothetic copies. By the convexity of $K_1$, if one homothetic copy contains all $\gamma_{10}(K_1)$-node points from one facet of $K_1$, then it must contain the centre of this facet. By Lemma 4.1 and the Pigeonhole principle, for the last four homothetic copies of $K_1$, each of them can not contain all $\gamma_{10}(K_1)$-node points from three adjacent facets, but each of them must contain all $\gamma_{10}(K_1)$-node points from two adjacent facets at least.

\begin{figure}[htbp]
\centering
\includegraphics[scale=0.5]{5.png}
\caption{}
\end{figure}

Next, we will show that if one homothetic copy of $K_1$ covers all $\frac{3}{5}$-node points from two adjacent faces, then the ratio of this homothetic copy is not less than $\frac{3}{5}$. Without loss of generality, we can let $\Delta pab, \Delta qab$ be the two adjacent faces of $K_1$ and $\boldsymbol e, \boldsymbol f, \boldsymbol g\in \Delta pab, \boldsymbol e^{'}, \boldsymbol f^{'}, \boldsymbol g^{'}\in \Delta qab$ be $\frac{3}{5}$-node points in these adjacent faces, see Figure $5$. However, it is sufficient to prove the ratio of a homothetic copy of $K_1$ covering $\frac{3}{5}$-node points $\boldsymbol e, \boldsymbol f, \boldsymbol e^{'}, \boldsymbol f^{'}$ is not less than $\frac{3}{5}$. Let $\lambda K_1+\boldsymbol u$ be one homothetic copy containing $\frac{3}{5}$-node points $\boldsymbol e, \boldsymbol f, \boldsymbol e^{'}, \boldsymbol f^{'}$. By the convexity of $K_1$, this homothetic copy contains the quadrilateral $efe^{'}f^{'}$. Let $\boldsymbol m_{ad}, \boldsymbol m_{bc}$ be the middle points of segment $ad, bc$, respectively. By elementary geometric property, the plane that contains $efe^{'}f^{'}$ is parallel to the plane that contains $pm_{ad}qm_{bc}$. Since $efe^{'}f^{'}\subset \lambda K_1+\boldsymbol u$, there is a homothety of $pm_{ad}qm_{bc}$ containing $efe^{'}f^{'}$. By elementary geometry argument, we get that the smallest ratio of homothety of $pm_{ad}qm_{bc}$ containing $efe^{'}f^{'}$ is $\frac{3}{5}$. It implies that $\lambda\geq \frac{3}{5}$. Hence, we have proved that $\gamma_{10}(K_1)\geq \frac{3}{5}$. This is a contradiction to the assumption. So we complete the proof of this theorem.
\end{proof}

\begin{proof}[Proof of Theorem $1.5$]
Since the sequence $\gamma_m(K_1)$ is nonincreasing,
$$\gamma_{13}(K_1)\leq \gamma_{12}(K_1) \leq \gamma_{11}(K_1) \leq \gamma_{10}=\frac{3}{5}.$$

In the other hand, if $\gamma_{13}(K_1)<\frac{3}{5}$, then by the proof of Theorem 4.2, we can see that the first six homothetic copies of $K_1$ which contain the vertices of $K_1$ can not cover the $K_1$ completely. Also, they can not cover the $\frac{3}{5}$-node points of each facet of $K_1$. Since there are seven homothetic copies of $K_1$ left and $K_1$ has eight facets, by the Pigeonhole principle and Lemma 4.1, there must be one homothetic copy containing $\frac{3}{5}$-node points from two facets at least. By the proof of Theorem 4.2, if a homothetic copy of $K_1$ covers all $\frac{3}{5}$-node points from two adjacent facets, then the ratio of this homothetic copy is not less than $\frac{3}{5}$, i.e., $$\gamma_{13}(K_1)\geq \frac{3}{5}.$$
This is implies that $\gamma_{13}(K_1)= \frac{3}{5}$. Thus,
$\gamma_{13}(K_1)=\gamma_{12}(K_1)=\gamma_{11}(K_1) =\gamma_{10}=\frac{3}{5}.$
\end{proof}

\section{Case for $m=14, 15, 16, 17.$ }

\begin{proof}[Proof of Theorem $1.6$]
By lemma 3.6, we first take
$$\Gamma_1=\{\boldsymbol x=(x_1,x_2,x_3):x_1=\frac{3}{7}, \boldsymbol x\in \partial (K_1)\},$$
$$\Gamma_2=\{\boldsymbol x=(x_1,x_2,x_3):x_1\in (\frac{2}{7}, \frac{6}{7}), x_2=\frac{1}{7} \text{\quad or}\quad x_1=\frac{2}{7}, x_2\in (\frac{1}{7}, \frac{5}{7}), \boldsymbol x\in \partial (K_1)\},$$
$$\Gamma_3=\{\boldsymbol x=(x_1,x_2,x_3):x_1\in (\frac{1}{7}, \frac{5}{7}), x_2=\frac{2}{7} \text{\quad or}\quad x_1=\frac{1}{7}, x_2\in (\frac{2}{7}, \frac{6}{7}), \boldsymbol x\in \partial (K_1)\},$$
$\mu=\frac{4}{7}$, $\boldsymbol y=(\frac{3}{7},0,0), \boldsymbol y^{'}=(\frac{2}{7},\frac{1}{7},0), \boldsymbol y^{"}=(\frac{1}{7},\frac{2}{7},0)$, and let $R_1, R_2, R_3$ denote the part of $\partial (K_1)$ bounded by $\Gamma_1, \Gamma_2, \Gamma_3$ and containing $(1,0,0), (\frac{4}{7}, \frac{3}{7}, 0), (\frac{3}{7}, \frac{4}{7}, 0)$, respectively. By Lemma 3.6 we have $R_1\subset \mu K_1+\boldsymbol y, R_2\subset \mu K_1+\boldsymbol y^{'}, R_3\subset \mu K_1+\boldsymbol y^{''}$. Therefore, in this case, $K_1$ can be covered by the union of $\mu K_1\pm (\frac{3}{7},0,0)$, $\mu K_1\pm (0,\frac{3}{7},0)$, $\mu K_1\pm (0,0,\frac{3}{7})$, $\mu K_1\pm (\frac{2}{7},\frac{1}{7},0)$, $\mu K_1\pm (\frac{1}{7},\frac{2}{7},0)$, and thus $\gamma_{14}(K_1)\leq \frac{4}{7}$.

In the other hand, If $\gamma_{14}(K_1)<\frac{4}{7}$, by the proof of Theorem 4.2, except the first six homothetic copies of $K_1$ that cover vertices of $K_1$, there are eight homothetic copies of $K_1$ left. By Lemma 4.1, each of them only can cover $\frac{4}{7}$-node points from two adjacent facets of $K_1$ at most, but must cover three $\frac{4}{7}$-node points at least. Without loss of generality, similarly to Theorem 4.2, we choose adjacent facets $\Delta pab, \Delta qab$. In particular, the line through $\boldsymbol e$ and $\boldsymbol f$ is parallel to $ab$. $\boldsymbol e, \boldsymbol f, \boldsymbol g\in \Delta pab, \boldsymbol e^{'}, \boldsymbol f^{'}, \boldsymbol g^{'}\in \Delta qab$ be the $\frac{4}{7}$-node points from these two adjacent faces. And let $\boldsymbol c, \boldsymbol c^{'}$ be the middle points of segments $ef, e^{'}f^{'}$. We find if a homothetic copy of $K_1$ covers $\boldsymbol e, \boldsymbol f, \boldsymbol e^{'}, \boldsymbol f^{'}$, then the ratio of the smallest possible homothetic copy of $K_1$ that satisfies above condition is $\frac{5}{7}$. Thus, these $\frac{4}{7}$-node points from these two adjacent facets must be covered by two homothetic copies $K_1$. Let $\lambda K_1+\boldsymbol u_1, \lambda K_1+\boldsymbol u_2$ be two homothetic copies of $K_1$ that cover $\frac{4}{7}$-node points from $\Delta pab$ and $\Delta qab$. In the following, we will show that if the homothetic copy $\lambda K_1+\boldsymbol u_1$ or $\lambda K_1+\boldsymbol u_2$ covers three $\frac{4}{7}$-node points from two adjacent facets, then $\lambda\geq \frac{4}{7}.$

(i) $\lambda K_1+\boldsymbol u_1$ covers three $\frac{4}{7}$-node points only from one facet. Without loss of generality, we assume that $\lambda K_1+u_1$ contains three $\frac{4}{7}$-node points from $\Delta pab$. Then by the convexity of $K_1$, $\lambda K_1+\boldsymbol u$ contains the convex hull of these three points, i.e., $\Delta efg\subset \lambda K_1+\boldsymbol u$. Moreover, the plane that contains $\Delta efg$ is parallel to the plane that contains $\Delta pab$, then $\Delta efg$ must be contained in a homothety of $\Delta pab$. By simple calculation, the smallest ratio of homothety of $\Delta pab$ that contains $\Delta efg$ is $\frac{4}{7}$. This implies that $\lambda\geq \frac{4}{7}$.

(ii) $\lambda K_1+\boldsymbol u$ covers three $\frac{4}{7}$-node points from two facets.

Case 1: $\lambda K_1+\boldsymbol u$ covers arbitrarily three points from $\{\boldsymbol e, \boldsymbol f, \boldsymbol e^{'}, \boldsymbol f^{'}\}$, without loss of generality, we can assume that $\{\boldsymbol e, \boldsymbol f, \boldsymbol e^{'}\}\in \lambda K_1+\boldsymbol u_1$. By the convexity of $K_1$, $\Delta efe^{'}\subset \lambda K_1+\boldsymbol u_1$. Since the plane that contains $\Delta efe^{'}$ is parallel to the plane that contains $pm_{ad}qm_{bc}$, there must be a homothety of $pm_{ad}qm_{bc}$ such that $\Delta efe^{'}$ containing in it. By basic calculation, the ratio of the smallest possible homothetic copy of $K_1$ that satisfies above condition is $\frac{5}{7}$. Thus, $\lambda\geq \frac{5}{7}>\frac{4}{7}.$

Case 2: $\boldsymbol e, \boldsymbol g, \boldsymbol e^{'}\in \lambda K_1+\boldsymbol u_1$, then $\boldsymbol f, \boldsymbol f^{'}, \boldsymbol g^{'}\in \lambda K_1+\boldsymbol u_2$. Since the triangles $\Delta efg$, $\Delta e^{'}f^{'}g^{'}$ completely covered by these two homothetic copies of $K_1$, $\boldsymbol c, \boldsymbol c^{'}$ must be covered by one of the homothetic copies.

Subcase 1: $\boldsymbol c, \boldsymbol c^{'}\in \lambda K_1+\boldsymbol u_1$. Then $ecc^{'}e^{'}\in \lambda K_1+\boldsymbol u_1$. Since the plane that contains $ecc^{'}e^{'}$ is parallels to the plane that contains $pm_{ad}qm_{bc}$, there must be a homothety of $pm_{ad}qm_{bc}$ containing $ecc^{'}e^{'}$ in it. By elementary geometry calculation, the smallest ratio of homothety of $pm_{ad}qm_{bc}$ containing $\Delta ege^{'}$ is $\frac{4}{7}$. Thus, $\lambda\geq \frac{4}{7}.$

Subcase 2: $\boldsymbol c\in \lambda K_2+\boldsymbol u_1, \boldsymbol c^{'}\in \lambda K_1+\boldsymbol u_2$, then $\Delta ece^{'}\in \lambda K_1+\boldsymbol u_1$. Since the plane that contains $\Delta ece^{'}$ is parallel to the plane that contains $pm_{ad}qm_{bc}$, there is a homothety of $pm_{ad}qm_{bc}$ such that $\Delta ece^{'}$ containing in it. By simple calculation, the smallest ratio of homothety of $pm_{ad}qm_{bc}$ containing $\Delta ece^{'}$ is $\frac{4}{7}$. Thus, $\lambda \geq \frac{5}{7}>\frac{4}{7}.$

Case 3: $\boldsymbol e, \boldsymbol f, \boldsymbol g^{'}\in \lambda K_1+\boldsymbol u_1$, then $\boldsymbol e^{'}, \boldsymbol f^{'}, \boldsymbol g\in \lambda K_1+\boldsymbol u_2$. Since the triangles $\Delta efg$, $\Delta e^{'}f^{'}g^{'}$ are completely covered by these two homothetic copy of $K_1$, $\boldsymbol c, \boldsymbol c^{'}$ must be covered by one of the homothetic copies.

Subcase 1: $\boldsymbol c, \boldsymbol c^{'}\in \lambda K_1+\boldsymbol u_1,$ then  $ecc^{'}e^{'}\in \lambda K_1+\boldsymbol u_1$. This case is the same as the first subcase in Case 2. So, $\lambda\geq \frac{5}{7}>\frac{4}{7}.$

Subcase 2: $\boldsymbol c\in \lambda K_1+\boldsymbol u_1,$ then $\boldsymbol c^{'}\in \lambda K_1+\boldsymbol u_2.$ Then $\Delta ecg^{'}\in \lambda K_1+\boldsymbol u_1$. Since the plane that contains $\Delta ecg^{'}$ is parallel to $pm_{ad}qm_{bc}$, there is a homothety of $pm_{ad}qm_{bc}$ such that $\Delta ecg^{'}$ containing in it. By simple calculation, the smallest ratio of homothety of $pm_{ad}qm_{bc}$ containing $\Delta ecg^{'}$ is $\frac{4}{7}$, i.e., $\lambda\geq \frac{4}{7}.$ Thus, this is a contradiction to $\gamma_{14}(K_1)<\frac{4}{7}$. So, $\gamma_{14}(K_1)=\frac{4}{7}$.
\end{proof}

\begin{proof}[Proof of Theorem $1.7$]
Since the sequence $\gamma_m(K_1)$ is nonincreasing,
$$\gamma_{17}(K_1)\leq \gamma_{16}(K_1) \leq \gamma_{15}(K_1) \leq \gamma_{14}=\frac{4}{7}.$$

In the other hand, if $\gamma_{17}(K_1)<\frac{3}{5}$, then by the proof of Theorem 4.2, we can see that the first six homothetic copies of $K_1$ which contain the vertices of $K_1$ can not cover $K_1$ completely. Also, they can not cover the $\frac{4}{7}$-node points of each facet of $K_1$. Since there are eleven homothetic copies of $K_1$ left and $K_1$ has eight facets, by the Pigeonhole principle and Lemma 4.1, there must be one homothetic copy containing three $\frac{4}{7}$-node points at least. By the proof of Theorem 5.1, if a homothetic copy of $K_1$ covers three $\frac{4}{7}$-node points, then the ratio of this homothetic copy is not less than $\frac{4}{7}$. Thus, $$\gamma_{17}(K_1)\geq \frac{4}{7}.$$
This implies that $\gamma_{17}(K_1)= \frac{4}{7}$. Thus,
$$\gamma_{17}(K_1)=\gamma_{16}(K_1)=\gamma_{15}(K_1) =\gamma_{14}=\frac{4}{7}.$$

\end{proof}

 {\bf Acknowledgement:}  This work is supported by the National Natural Science Foundation of China (NSFC11921001) and the National Key Research and Development Program of China (2018YFA0704701). The authors are grateful to professor C.Zong for his supervision and discussion.

\end{document}